\theoremstyle{plain}
\newcommand{\norm}[1]{\|#1\|}
\newcommand{\N}{\mathbb{N}}
\newcommand{\tr}{\operatorname{tr}}
\newcommand{\A}{\mathcal{A}}
\newcommand{\cumuf}{\kappa^{\operatorname{free}}}
\newcommand{\NC}{\operatorname{NC}}
\newcommand{\homo}{\operatorname{hom}}
\title[CLT]{Central limit theorem for $\epsilon$-independent products and higher-order tensors}
\author[G. Cébron and P.\ O.\ Santos \and P.\ Youssef]{%
        Guillaume Cébron \and
        Patrick Oliveira Santos \and 
        Pierre Youssef
        }
\address{Guillaume Cébron. Institut de Mathématiques de Toulouse; UMR5219; Université de Toulouse; CNRS; UPS, F-31062 Toulouse, France}
\email{guillaume.cebron@math.univ-toulouse.fr}
\address{Patrick Oliveira Santos. Division of Science, NYU Abu Dhabi, Abu Dhabi, UAE.}
\email{po2150@nyu.edu}
\address{Pierre Youssef. Division of Science, NYU Abu Dhabi, Abu Dhabi, UAE \& Courant Institute of Mathematical Sciences, New York University, New York, USA.}
\email{yp27@nyu.edu}
\begin{document}
\newtheorem{theorem}{Theorem}[section]

\newtheorem{corollary}[theorem]{Corollary}
\newtheorem{lemma}[theorem]{Lemma}
\newtheorem{conjecture}[theorem]{Conjecture}
\newtheorem{proposition}[theorem]{Proposition}

\theoremstyle{definition}
\newtheorem{example}[theorem]{Example}
\newtheorem{definition}[theorem]{Definition}
\newtheorem{remark}[theorem]{Remark}

\newtheorem*{model}{Model}

\maketitle
\begin{abstract}
We establish a central limit theorem (CLT) for families of products of $\epsilon$-independent random variables. Building on the approach in \cite{CSY}, we utilize graphon limits to encode the evolution of independence and characterize the limiting distribution. Our framework subsumes a wide class of dependency structures and includes, as a special case, a CLT for higher-order tensor products of free random variables. Our results generalize those in \cite{CSY}, extend the findings of \cite{lancien2024centrallimittheoremtensor}, which were limited to order-2 tensors, and recover as a special case the recent tensor-free CLT of \cite{nechita2025tensorfreeprobabilitytheory}, which was obtained through the development of a tensor analogue of free probability. In contrast, our approach is more direct and provides a unified and concise derivation of a more general CLT via graphon convergence.
\end{abstract}

\section{Introduction}
The Free Central Limit Theorem (CLT) serves as a cornerstone of free probability theory \cite{voiculescu1985symmetries}, \cite[Lecture 8]{nica2006lectures}. It parallels the classical CLT, replacing classical independence with free independence and the Gaussian limit with the semi-circle law. 
Recently, the authors \cite{CSY} unified these two by establishing a central limit theorem for $\epsilon$-independent variables. 
Let $(\A,\tau)$ be a noncommutative probability space. Based on the graph product of groups \cite{green1990graph}, the notion of $\epsilon$-independence was introduced in \cite{mlotkowski2004lambdafree} and elaborated in several subsequent works \cite{caspers2017graph,ebrahimifard2017,speicherjanusz2016mixture} (see also \cite{charlesworth2021matrix,charlesworth2024random} for matrix models). Given a loopless graph $g_n=([n],E)$ over $n$ vertices,  we say that subalgebras $\A_1,\ldots,\A_n \subset \A$ are $g_n$-independent if
\begin{enumerate}
    \item For every $(i,j) \in E$, the subalgebras $\A_i,\A_j$ are classical independent (in particular, they commute);
    \item For every $k \ge 1$, $i \in [n]^k$ such that for all $j_1<j_2$ with $i_{j_1}=i_{j_2}$, there exists $j_1<j_3<j_2$ with $(i_{j_1},i_{j_3}) \notin E$, then for any centered random variables $a_j \in \A_{i_j}$, $j \in [k]$, we have
    \begin{align*}
        \tau(a_1\cdots a_k)=0.
    \end{align*}
\end{enumerate}

Note that when the graph $g_n$ is complete (resp. edgeless), the above corresponds to classical independence (resp. freeness). 
The authors in \cite{CSY} used the theory of graphons to track the evolution of the graphs of independence. Recall that a graphon consists of a measurable bounded symmetric function $w:[0,1]^2 \to [0,1]$. Given a graphon $w$ and graphs $f=(V(f),E(f)), g=(V(g),E(g))$, we define
\begin{align*}
    &\rho(f,w):=\int_{[0,1]^{V(f)}} \prod_{(u,v) \in E} w(x_u,x_v) \, \text{d}x;\\
    &\rho(f,g)=\frac{1}{|V(g)|^{|V(f)|}}\sum_{\phi: V(f) \to V(g)}\prod_{(u,v)\in E(f)}\mathbf{1}_{(\phi(u),\phi(v))\in E(g)}.
\end{align*}
A sequence of simple graphs $g_n$ in $n$ nodes converges to a graphon $w$ if, for every simple finite graph $f=(V(f),E(f))$, we have
\begin{align*}
    \rho(f,g_n) \to \rho(f,w).
\end{align*}
We say that
a sequence of (self-adjoint) variables $(a_n)_{n\in \N}$ in $\A$  converges in distribution to $a\in \A$ if, for every $p\in \N$, $\tau(a_n^p) \to \tau(a^p)$ as $n\to \infty$. More generally, we define the convergence in $*$-moments of a sequence of not necessarily self-adjoint variables $(b_n)_{n\in \N}$ towards some variable $z$ if for any $k \ge 1$, $t \in \{1,*\}^k$, we have
\begin{align*}
    \tau(b_n^t) \to \tau(z^t),
\end{align*}
where
\begin{align*}
    b_n^t:=b_n^{t_1}\cdots b_n^{t_k}.
\end{align*}

The authors in \cite{CSY} established the following CLT for $\epsilon$-independent variables encompassing the classical and free central limit theorems. 
The combinatorial proof of those limit theorems latter deals with pair partitions. We denote by $P_2(p)$ the set of pair partitions of $[p]$ and for a pair partition $\pi\in P_2(p)$, we define its intersection graph $f_\pi$ whose vertex set consists of the blocks of $\pi$, and there exists an edge between two blocks if they cross.

\begin{theorem}[\cite{CSY}]\label{theorem: case L=1}
    Let $g_n$ be a graph over $[n]$, and $a\in \A$ be a self-adjoint random variable with mean $\lambda$ and variance $\sigma^2$. Assume that $g_n \to w$ in the graphon sense. Let $a_1,\ldots,a_n\in \A$ be $g_n$-independent and identically distributed random variables with common law $a$. Let
    \begin{align*}
        S_n=\frac{1}{\sigma\sqrt{n}}\sum_{k\in [n]}(a_k-\lambda).
    \end{align*}
    Then, $S_n$ converges in distribution to a measure $\mu_w$ that depends only on $w$, whose odd moments vanish and even moments are equal to
    \begin{align*}
        \int x^{2p}\,\text{d}\mu_w=\sum_{\pi \in P_2(2p)} \rho(f_\pi,w). 
    \end{align*}
\end{theorem}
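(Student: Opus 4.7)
The proof proceeds by the method of moments. Expanding
\[
\tau(S_n^p) = \frac{1}{\sigma^p n^{p/2}} \sum_{i \in [n]^p} \tau\bigl((a_{i_1}-\lambda)\cdots (a_{i_p}-\lambda)\bigr),
\]
I would group the sum by the set partition $\pi(i)$ of $[p]$ recording positions with equal indices. For a partition $\pi$ with $k$ blocks, only $n(n-1)\cdots(n-k+1)$ tuples realize $\pi$, and since the expectation is uniformly bounded by $\|a-\lambda\|^p$, any partition contributing to the limit must have $k \ge p/2$.

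\textbf{Only pair partitions survive.} I would argue that a partition containing a singleton block contributes exactly $0$: if $j$ indexes a singleton, then $i_j$ differs from every other $i_{j'}$, and combining property~(1) (commutativity with adjacent $\A_i$'s) with property~(2) (vanishing of alternating centered products) permits isolating the factor $a_{i_j}-\lambda$, whose vanishing mean then kills the expectation. Since $k \ge p/2$ forces either $k = p/2$ with every block of size~$2$, or strictly more blocks (hence some singletons), only $P_2(p)$ can contribute; in particular, odd moments vanish.

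\textbf{Selecting admissible tuples.} For $\pi \in P_2(p)$ and $i$ compatible with $\pi$ (distinct blocks receiving distinct values), I would prove
\[
\tau\bigl((a_{i_1}-\lambda)\cdots (a_{i_p}-\lambda)\bigr) = \sigma^p
\]
if every crossing pair of blocks of $\pi$ has adjacent indices in $g_n$, and $0$ otherwise. The proof is a peeling induction on $p$. Call a block $\{j,j'\}$ \emph{peelable} if all intermediate indices $i_c$ (with $j<c<j'$) satisfy $(i_j,i_c) \in E$. If such a block exists, use property~(1) to commute $a_{i_j}-\lambda$ next to $a_{i_{j'}}-\lambda$, extract $\sigma^2 = \tau((a_{i_j}-\lambda)^2)$ using the freeness or classical independence of $\A_{i_j}$ from the remaining algebras, and apply the inductive hypothesis to the shortened word. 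If no block is peelable, then the hypothesis of property~(2) is satisfied (its assumption is precisely the negation of peelability for every block) and the expectation vanishes. When all crossings correspond to edges, every innermost block is peelable and the recursion produces exactly $\sigma^p$; when some crossing is not an edge, the two blocks forming that crossing mutually obstruct each other from peelability, so the recursion eventually reaches a configuration with no peelable block and property~(2) returns~$0$.

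\textbf{Passing to the limit.} For each $\pi \in P_2(2p)$, the number of admissible tuples equals $n^p \rho(f_\pi, g_n) + O(n^{p-1})$, where the main term counts injective homomorphisms from $f_\pi$ into $g_n$ and the error absorbs coincidences between the indices of distinct blocks. Graphon convergence $g_n \to w$ gives $\rho(f_\pi, g_n) \to \rho(f_\pi, w)$, so summing over pair partitions yields
\[
\tau(S_n^{2p}) \longrightarrow \sum_{\pi \in P_2(2p)} \rho(f_\pi, w),
\]
matching the moments of $\mu_w$. The technical heart of the argument lies in the third step: making the peeling induction rigorous, and in particular justifying the extraction of $\sigma^2$ via the subtle factorization properties that $g_n$-independence inherits from combining commutativity with the vanishing of alternating centered products.
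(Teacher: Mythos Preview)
Your overall architecture is correct and matches the paper's template: expand moments, group by $\ker(i)$, eliminate non-pair partitions, evaluate the pair-partition contribution, and pass to the graphon limit. The difference lies in how the mixed moment $\tau\bigl((a_{i_1}-\lambda)\cdots(a_{i_p}-\lambda)\bigr)$ is computed. The paper does not peel: it invokes Speicher's moment--cumulant formula for $\epsilon$-independence (Lemma~\ref{lemma: speicher}), which gives $\tau(x_1\cdots x_p)=\sum_{\sigma\in \NC(g_n,i)}\cumuf_\sigma$. Since the $x_j$ are centered, any $\sigma$ with a singleton vanishes; since $\sigma\le\ker(i)$ and $\ker(i)\in P_2(p)$, the only surviving $\sigma$ is $\ker(i)$ itself, and the condition $\ker(i)\in \NC(g_n,i)$ is precisely ``every crossing of $\pi$ lands on an edge of $g_n$''. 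This yields in one stroke both the singleton elimination and the $\sigma^p/0$ dichotomy that your peeling recursion works hard to establish. (In this paper Theorem~\ref{theorem: case L=1} is then recovered as the $L=1$ specialization of Theorem~\ref{theorem: precise main theorem}.)

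Your peeling route is genuinely more elementary---it never mentions free cumulants---but it has a soft spot you yourself flag. Both your singleton step and your $\sigma^2$-extraction step rest on the following lemma: if $x\in\A_i$ is centered and each $y_k\in\A_{j_k}$ with $j_k\ne i$, then $\tau(y_1\cdots x\cdots y_m)=0$. You assert that ``combining property~(1) with property~(2) permits isolating the factor'', but this is not immediate: property~(2) only applies to words of \emph{centered} elements whose \emph{every} repeated index is separated by a non-edge, and the $y_k$ need be neither centered nor so separated. The lemma is true (it drops out of Lemma~\ref{lemma: speicher} instantly, since any $\sigma\le\ker(\theta)$ must make $\{$position of $x\}$ a singleton), and it can also be proved directly by an induction on $m$ that centers the $y_k$, finds a pair $y_p,y_q$ violating the hypothesis of property~(2), commutes them together, and splits $y_py_q$ into scalar plus centered---but this is real work you have not supplied. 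Once that lemma is in hand, your peeling induction goes through exactly as you describe: a peelable block can be commuted together and replaced by $\sigma^2+c$, the $c$-term dies by the lemma, and a non-edge crossing persists under peeling until property~(2) applies to the residue.

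In short: either cite Lemma~\ref{lemma: speicher} for the singleton/extraction step (at which point your argument collapses to the paper's), or prove the isolated-centered-factor lemma directly to keep the cumulant-free flavor. The graphon step at the end is identical to the paper's.
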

The class of measures $\mu_w$ is quite vast, encompassing well-known distributions like $q$-Gaussians as well as many exotic laws (see \cite{CSY} for details).

In another direction motivated by the study of the limiting behavior of random quantum channels, \cite{lancien2024centrallimittheoremtensor} establishes a central limit theorem for tensor product random variables of the form $c_k=a_k\otimes a_k$, where $(a_k)_{k\in \N}$ is a free family of variables. 

\begin{theorem}[\cite{lancien2024centrallimittheoremtensor}]\label{theorem: L=2}
     Let $a \in \A$ be a self-adjoint random variable with mean $\lambda$ and variance $\sigma^2$, and denote  
    $$
    \delta^2:= \sigma^2(\sigma^2+2\lambda^2),\quad \alpha:=\frac{2\lambda^2}{\sigma^2+2\lambda^2} \in [0,1].
    $$
Given $(a_k)_{k\in \N}$ a sequence of free copies of $a$, the distribution $\mu_{S_n}$ of the normalized sum 
    \begin{align*}
        S_n:= \frac{1}{\delta \sqrt{n}}\sum_{k \in [n]}(a_k\otimes a_k-\lambda^2)
    \end{align*}
    converges in distribution as $n\to \infty$ to 
    \begin{equation*}
        \sqrt{\alpha}\left(\frac{1}{\sqrt{2}}s_1+\frac{1}{\sqrt{2}}s_2\right)+\sqrt{1-\alpha}\,s_3,
    \end{equation*}
    where $s_1,s_2,s_3$ are semi-circle random variables, $s_1,s_2$ are classical independent, both free independent from $s_3$.
\end{theorem}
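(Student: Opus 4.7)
The plan is to derive Theorem~\ref{theorem: L=2} as a direct application of the paper's graphon-based CLT, extending Theorem~\ref{theorem: case L=1} from sums to products of two $\epsilon$-independent copies.

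\textbf{Setup.} I would first embed the problem into $(\A\otimes\A,\tau\otimes\tau)$ by introducing $\tilde a_k:=a_k\otimes 1$ and $\tilde a_k':=1\otimes a_k$ for $k\in[n]$, so that $a_k\otimes a_k=\tilde a_k\tilde a_k'$. Freeness of $(a_k)_k$ transfers to freeness within each of the two ``rows'' $(\tilde a_k)_k$ and $(\tilde a_k')_k$, while the tensor structure forces $\tilde a_k$ to commute with every $\tilde a_j'$ and their mixed moments to factor as $\tau\otimes\tau$, that is, classical independence across rows. The $2n$ subalgebras are therefore $g_{2n}$-independent with $g_{2n}$ the complete bipartite graph $K_{n,n}$, whose graphon limit is the bipartite $w(x,y)=\mathbf{1}_{\{x\le 1/2,\,y>1/2\}}+\mathbf{1}_{\{x>1/2,\,y\le 1/2\}}$.

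\textbf{Decomposition.} Setting $\mathring{\tilde a}_k=\tilde a_k-\lambda$ and $\mathring{\tilde a}_k'=\tilde a_k'-\lambda$, I would expand
\begin{align*}
a_k\otimes a_k-\lambda^2=\mathring{\tilde a}_k\mathring{\tilde a}_k'+\lambda\,\mathring{\tilde a}_k+\lambda\,\mathring{\tilde a}_k',
\end{align*}
so that $S_n=W_n+U_n+V_n$ with $U_n=\tfrac{\lambda}{\delta\sqrt n}\sum_k\mathring{\tilde a}_k$, $V_n=\tfrac{\lambda}{\delta\sqrt n}\sum_k\mathring{\tilde a}_k'$, and $W_n=\tfrac{1}{\delta\sqrt n}\sum_k\mathring{\tilde a}_k\mathring{\tilde a}_k'$. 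A direct computation using $\tau\otimes\tau$ and freeness of $(a_k)_k$ yields $\var(U_n)=\var(V_n)=\alpha/2$, $\var(W_n)=1-\alpha$, and all pairwise covariances vanishing.

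\textbf{Invoking the product CLT.} I would then invoke the paper's generalization of Theorem~\ref{theorem: case L=1} to $\epsilon$-independent products, obtaining convergence of the joint $*$-distribution of $(U_n,V_n,W_n)$. The limits $(U_\infty,V_\infty,W_\infty)$ are centered semicirculars of variances $\alpha/2,\alpha/2,1-\alpha$ whose joint law is encoded by the bipartite graphon $w$. Two combinatorial consequences of the bipartite structure of $w$ then emerge: $U_\infty$ and $V_\infty$ commute and are classically independent, since their interaction comes entirely from edges of $K_{n,n}$ which translate into classical crossings in the graphon weighting; and $W_\infty$ is free from the algebra generated by $U_\infty,V_\infty$, since any mixed pair partition containing a $W_n$-block together with a $U_n$- or $V_n$-block contributes at strictly lower order in $n$ once the trace factors through $\tau\otimes\tau$ and freeness of the $a_k$'s is applied.

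\textbf{Conclusion and main obstacle.} Writing $U_\infty=\sqrt{\alpha/2}\,s_1$, $V_\infty=\sqrt{\alpha/2}\,s_2$, $W_\infty=\sqrt{1-\alpha}\,s_3$ identifies $S_n\to\sqrt{\alpha}(s_1+s_2)/\sqrt 2+\sqrt{1-\alpha}\,s_3$, exactly the claim. The hard part is this joint-CLT step: the variance bookkeeping is elementary, but cleanly establishing the asymptotic classical/free independence structure of the limit amounts to verifying that every mixed moment reduces, at leading order, to a sum over pair partitions whose intersection graphs, weighted by the bipartite graphon, split as a product of contributions from the $U$--$V$ and $W$ groups. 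This is precisely where the graphon machinery pays off, reducing a potentially intricate moment analysis to a finite combinatorial identity.
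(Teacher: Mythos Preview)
Your approach is essentially the paper's: decompose $a_k\otimes a_k-\lambda^2$ into three centered pieces indexed by $\{1\},\{2\},\{1,2\}$, apply the product CLT (the paper routes this through Corollary~\ref{corollary: classic tensor free case} with $L=2$), and read off the independence structure of the limits.

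Two points to tighten. First, the graphon $w$ that enters the paper's product CLT is the limit of the \emph{within-layer} graph $g_n'$, which is the \emph{zero} graphon here since the $a_k$'s are free; the between-layer structure is carried separately by $g_L$ (complete on $\{1,2\}$), so the bipartite graphon of $K_{n,n}$ you write down plays no direct role in the moment formulas. Second, your justification for the freeness of $W_\infty$ from $(U_\infty,V_\infty)$ is not a ``lower-order'' phenomenon: a pair partition with a crossing between a $\{1,2\}$-block and a $\{1\}$- or $\{2\}$-block contributes \emph{exactly} zero, because $\{1,2\}\cap\{1\}\neq\varnothing$ forces $w^*_{\{1,2\},\{1\}}=w=0$ at that crossing. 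In the paper's packaging this is simply the statement that $(\{1,2\},\{1\}),(\{1,2\},\{2\})\notin E(h_L)$, while $(\{1\},\{2\})\in E(h_L)$ yields the classical independence of $U_\infty,V_\infty$.
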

The limiting law appearing in the above theorem is partially reminiscent of the laws uncovered in Theorem~\ref{theorem: case L=1}. For instance, the law of $\frac{1}{\sqrt{2}}s_1+\frac{1}{\sqrt{2}}s_2$ coincides with that of $\mu_w$ when the graphon $w$ is given by $ \mathbf{1}_{[0,1/2)\times [1/2,1]}+\mathbf{1}_{[1/2,1]\times[0,1/2)}$ (see \cite[Example~7.4]{CSY}). 
In view of this, it is natural to seek a connection between both results. 
Moreover, the analysis of higher-order tensors naturally generalizes the results for order-two tensors, models more intricate structures, and introduces a rich interplay between free and classical independence. Notably, tensors of random variables correspond to products of independent variables. Indeed, tensors of the form $a_k^{(1)}\otimes \ldots\otimes a_k^{(L)}$ coincide with the product of $L$ independent variables $b_j= 1\otimes\ldots \otimes a_{k}^{(j)}\otimes \ldots \otimes 1$, $1\leq j\leq L$. As such, a CLT for fixed-order tensors can be interpreted as a study of the free sum of products of independent variables. This realization provides a new perspective, allowing us to model these products and sums through mixtures of free and classical independence. 

To formalize this idea, we employ the concept of $\epsilon$-independence mentioned above, enabling us to consider a general setting beyond the fixed-order tensors, considering mixtures of free and independent variables, as well as mixtures of their sums and products. 
In \cite{CSY}, we demonstrated the utility of graphon theory in tracking the evolution of these mixtures and establishing the corresponding limit theorems. Here, we also make use of these techniques to analyze the sum of mixtures of products modeled by $\epsilon$-independence. More precisely, let $g_L$ be a graph over $[L]$, and $(\A_l)_{l\in [L]}\subset \A$ be $g_L$-independent subalgebras. For every $1\leq l\leq L$ and every $n\geq 1$,  let $g'_n$ be a graph over $[n]$ and consider variables $(a_k^{(l)})_{k\in [n]}\subset \A_l$ which are $g'_n$-independent. We aim to capture the limiting behavior, as $n\to \infty$, of 
$$
\frac{1}{\sqrt{n}}\sum_{k=1}^n \big(\prod_{l=1}^L a_k^{(l)}-\lambda^L\big).
$$
Theorem~\ref{theorem: case L=1} corresponds to the case $L=1$, while Theorem~\ref{theorem: L=2} corresponds to $L=2$ with $g_2$ being the complete graph and $g_n'$ being the edgeless graph. 
Our main theorem generalizes both results to cover any depth $L$ and any graphs of independence $g_L$ and $g_n'$.

\begin{theorem}\label{theorem: main theorem introduction}
  Let $g_L$ be a graph over $[L]$, $g'_n$ be a graph over $[n]$, and $a\in \A$ be a self-adjoint random variable with mean $\lambda$ and variance $\sigma^2$. Assume that $g'_n\to w$ in the graphon sense. Let $(\A_l)_{l\in [L]}$ be $g_L$-independent subalgebras and $(a_k^{(l)})_{k\in [n]}\subset \A_l$ be $g'_n$-independent and identically distributed random variables with common law $a$. Let
    \begin{align*}
        S_n=\frac{1}{\sqrt{n}}\sum_{k\in [n]}\big(\prod_{l=1}^L a_k^{(l)}-\lambda^L\big).
    \end{align*}
    Then, $S_n$ converges in $*$-distribution to an explicit measure $\mu_{g_L,w,\lambda,\sigma}$ that depends on $g_L,w,\lambda,\sigma$.
\end{theorem}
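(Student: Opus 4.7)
My plan is to prove Theorem~\ref{theorem: main theorem introduction} by the method of moments, extending the argument of Theorem~\ref{theorem: case L=1} to this product setting. Fix a word $\varepsilon\in\{1,*\}^p$ and consider $\tau(S_n^{\varepsilon_1}\cdots S_n^{\varepsilon_p})$.

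\textbf{Step 1: centering and expansion.} Writing $a_m^{(l)}=\tilde a_m^{(l)}+\lambda$ with $\tilde a_m^{(l)}$ centered, and noting that the indices $1,\dots,L$ are pairwise distinct, the vacuous hypothesis of the $g_L$-vanishing axiom yields $\tau(\prod_{l\in T}\tilde a_m^{(l)})=0$ for every nonempty $T\subseteq [L]$, whence $\tau(\prod_l a_m^{(l)})=\lambda^L$. Thus each $X_m:=\prod_l a_m^{(l)}-\lambda^L$ is centered, and
$$X_m^{\varepsilon}=\sum_{\emptyset\ne T\subseteq[L]}\lambda^{L-|T|}\prod_{l\in T}^{\varepsilon}\tilde a_m^{(l)},$$
where the inner product is taken in increasing order of $l$ if $\varepsilon=1$ and in decreasing order if $\varepsilon=*$. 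Substituting yields
$$\tau(S_n^{\varepsilon_1}\cdots S_n^{\varepsilon_p})=n^{-p/2}\sum_{m\in[n]^p}\sum_{T_1,\ldots,T_p}\lambda^{\sum_j(L-|T_j|)}\,\tau(W_{m,T}),$$
where $W_{m,T}$ is the word in the centered letters $\tilde a_{m_j}^{(l)}$, $l\in T_j$, obtained by concatenation in the prescribed orders.

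\textbf{Step 2: unfolding the two layers of $\epsilon$-independence.} I would then evaluate $\tau(W_{m,T})$ using the two nested levels of independence. The commutation axiom along edges of $g_L$ makes letters in $\A_{l_1},\A_{l_2}$ with $(l_1,l_2)\in E(g_L)$ behave as classically independent, while the vanishing axiom across non-edges eliminates many configurations. A recursive reduction rewrites $\tau(W_{m,T})$ as a weighted sum of products $\prod_{l\in[L]}\tau(W_{m,T}^{(l)})$, where $W_{m,T}^{(l)}$ is the restriction of $W_{m,T}$ to letters from level $l$, and the weight is a combinatorial factor $C(g_L,\varepsilon,(T_j)_j,\pi)$ depending only on $g_L$, $\varepsilon$, the subsets $T_j$ and the equivalence partition $\pi$ of $[p]$ induced by $(m_1,\ldots,m_p)$. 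Each factor $\tau(W_{m,T}^{(l)})$ is now a trace of a product of centered $g'_n$-independent variables in $\A_l$, which is exactly the framework of Theorem~\ref{theorem: case L=1}.

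\textbf{Step 3: graphon limit and pair partitions.} Grouping multi-indices $m\in[n]^p$ by their equivalence partition $\pi$ on $[p]$, each level-$l$ factor summed over $m$ compatible with $\pi$ and properly normalized converges, by the graphon convergence $g'_n\to w$, to a homomorphism density $\rho(f_{\pi,l},w)$ for an intersection graph $f_{\pi,l}$ determined by $\pi$ and $(T_j)_j$. A power count against the $n^{-p/2}$ prefactor forces $|\pi|=p/2$, and the centering of the letters in each surviving block forces every block of $\pi$ to have size exactly two. The remaining sum, indexed by $\pi\in P_2(p)$ and the choices $T_1,\ldots,T_p$, assembles into the $*$-moments of the candidate limit $\mu_{g_L,w,\lambda,\sigma}$; realizability as a bona fide noncommutative distribution is verified by exhibiting an explicit operator model on the graph-product Hilbert space associated with $g_L$, with a semicircular/Gaussian decomposition at each node.

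\textbf{Main obstacle.} The critical step is Step~2: writing $\tau(W_{m,T})$ cleanly as a weighted product over levels is not immediate, since the $g_L$-axioms only prescribe vanishing for certain alternating patterns and commutation along edges, while generic words require recursive reductions via the graph-product calculus. Bookkeeping this reduction, so that the level-wise factorization remains compatible with the pair-partition structure emerging at each level in Step~3, is the main technical difficulty.
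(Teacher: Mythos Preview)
Your Step~1 (centering and expanding the product over subsets $T\subseteq[L]$) is exactly what the paper does. The paper, however, never attempts the level-wise factorisation you propose in Step~2. Instead it observes that the full array $(a_k^{(l)})_{k\in[n],\,l\in[L]}$ is $\epsilon$-independent with respect to the \emph{single} grid graph $g_n'\cdot g_L$ (the lexicographic product), and applies Speicher's moment--cumulant formula (Lemma~\ref{lemma: speicher}) directly on that combined structure. Since all letters are normalized, the trace of a word $W_{m,T}$ collapses to the indicator $\mathbf{1}_{\ker\theta\in\NC_2(g_n'\cdot g_L,\theta)}$, and the graph $g_L$ re-enters only through the explicit characterisation of this indicator (Lemma~\ref{lem: characterization of theta partitions}). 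No recursive word reduction, and no ``weighted sum of products over levels'', is required; the obstacle you flag in Step~2 is bypassed rather than solved.

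There is also a genuine gap in Step~3. Even in the favourable case where $g_L$ is complete and $\tau(W_{m,T})=\prod_l\tau(W_{m,T}^{(l)})$ holds exactly, the factors share the \emph{same} multi-index $m$, so
\[
\frac{1}{n^{p/2}}\sum_{\ker m=\pi}\ \prod_{l}\tau\bigl(W_{m,T}^{(l)}\bigr)\ \neq\ \prod_{l}\Bigl(\text{a level-}l\text{ sum over }m\Bigr),
\]
and you cannot pass to the limit level by level to obtain separate densities $\rho(f_{\pi,l},w)$. What the left-hand side actually is, after the pair-partition reduction, is a single injective homomorphism density in $g_n'$ of a graph whose edge set encodes the constraints coming from \emph{all} levels simultaneously (crossing conditions between blocks, together with the within-block constraints forced by $\alpha_r=\alpha_s$ and $g_L$). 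Identifying this joint object and taking its limit under $g_n'\to w$ is precisely the role of the decorated-graphon formalism in the paper (the constructions \eqref{eq: decorated intersection graph}--\eqref{eq: decorated compressed grid} and Lemma~\ref{lemma: rho computation}). So the decorated-graphon route is not cosmetic: it simultaneously replaces your Step~2 factorisation and resolves the correlated-sum issue in Step~3.
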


To improve readability and avoid technicalities, we do not explicitly state the limiting law $\mu_{g_L,w,\lambda,\sigma}$ here, referring instead to Section~\ref{sec: master theorem} for a precise formulation (see Theorem~\ref{theorem: precise main theorem}). When $L=1$, this law simplifies to $\mu_w$ and Theorem~\ref{theorem: main theorem introduction} recovers Theorem~\ref{theorem: case L=1}. Likewise, in the setting of Theorem~\ref{theorem: L=2}, the law $\mu_{g_L,w,\lambda,\sigma}$ properly normalized aligns with the one in Theorem~\ref{theorem: L=2}. Further details are provided in Section~\ref{sec: master theorem}. 
    
To illustrate the broad applicability of Theorem~\ref{theorem: main theorem introduction}, we consider the special case where $g_n'$ is the edgeless graph, leading to the following corollary. 

\begin{corollary}\label{corollary: tensor free case}
    Let $L\ge 1$ be fixed, and $g_L$ be a simple graph over $L$ vertices. Let $\A_1,\ldots,\A_L \subset \A$ be $g_L$-independent subalgebras and $a \in \A$ be a self-adjoint random variable with mean $\lambda$ and variance $\sigma^2$. For each $l\in [L]$, let $(a_n^{(l)})_{n \ge 1} \subset \A_l$ be free i.i.d copies of $a$, and define
    \begin{align*}
        S_n=\frac{1}{\sqrt{n}}\sum_{k \in [n]}\big(\prod_{l \in [L]}a_k^{(l)}-\lambda^L\big).
    \end{align*}
    Then, $S_n$ converges in $*$-moments to
    \begin{align*}
    \sum_{J \in \mathcal{P}^*(L)} \lambda^{|J^c|}\sigma^{|J|}s_{J},
    \end{align*}
    where $\mathcal{P}^*(L)$ denotes the collection of nonempty subsets of $[L]$. 
    Each variable $s_J$ is
    \begin{itemize}
        \item a semi-circle random variable if the subgraph of $g_L$ generated by $J$ is the complete graph;
        \item a circular random variable otherwise. 
    \end{itemize}
    Finally, the collection $(s_J)_{J\in \mathcal{P}^*(L)}$ is $h_L$-independent, where the set of vertices and edges of $h_L$ are given by
    \begin{align*}
        &V(h_L)=\mathcal{P}^*(L);\\
        &E(h_L)=\{(J_1,J_2): J_1 \times J_2 \subseteq E(g_L)\}.
    \end{align*}
\end{corollary}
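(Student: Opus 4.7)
My plan is to combine Theorem~\ref{theorem: main theorem introduction} with a direct identification of the joint $*$-moments in the edgeless setting. First, I center the variables as $a_k^{(l)} = \lambda + \tilde{a}_k^{(l)}$ and expand
\begin{equation*}
\prod_{l=1}^L a_k^{(l)} - \lambda^L \;=\; \sum_{\emptyset \neq J \subseteq [L]} \lambda^{|J^c|}\, b_k^J, \qquad b_k^J := \prod_{l \in J}^{\rightarrow} \tilde{a}_k^{(l)},
\end{equation*}
the product being taken in the natural increasing order of $l \in J$. This decomposes $S_n = \sum_J \lambda^{|J^c|} T_n^J$ with $T_n^J := n^{-1/2}\sum_k b_k^J$. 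Theorem~\ref{theorem: main theorem introduction} applied with $g_n'$ edgeless (so $w \equiv 0$) guarantees joint convergence of the family $(T_n^J)_J$ in $*$-distribution, so it remains to identify the joint limit.

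Next, I compute the joint $*$-moments via the expansion
\begin{equation*}
\tau(T_n^{J_1, t_1} \cdots T_n^{J_p, t_p}) = n^{-p/2} \sum_{k_1, \ldots, k_p \in [n]} \tau\bigl(b_{k_1}^{J_1, t_1} \cdots b_{k_p}^{J_p, t_p}\bigr),
\end{equation*}
exploiting both layers of independence. Free independence of $(\tilde{a}_k^{(l)})_k$ within each $\A_l$ (from $g_n'$ edgeless) restricts the dominant contributions to non-crossing pair partitions of the positions, while $g_L$-independence across the $\A_l$'s forces paired positions to carry the same species label $J$. Within a single species $J$, an iterated centering computation gives $\tau(b_k^J (b_k^J)^*) = \sigma^{2|J|}$, and shows $\tau((b_k^J)^2) = \sigma^{2|J|}$ if the induced subgraph of $g_L$ on $J$ is complete (all factors classically commute and $b_k^J$ is self-adjoint), whereas $\tau((b_k^J)^2) = 0$ otherwise, since any non-edge in the induced subgraph supplies a witness triggering the vanishing rule for the palindromic product $\tilde{a}_k^{(j_1)} \cdots \tilde{a}_k^{(j_{|J|})} \tilde{a}_k^{(j_1)} \cdots \tilde{a}_k^{(j_{|J|})}$. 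Combined with the non-crossing pair partition structure, this identifies each marginal limit as $\sigma^{|J|} s_J$ with $s_J$ semicircular in the first case and circular in the second.

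Finally, to verify the $h_L$-independence of $(s_J)_J$, I compare the mixed $*$-moments across distinct species against the $h_L$-independent prescription. When $J_1 \times J_2 \subseteq E(g_L)$, every generator of $b_k^{J_1}$ classically commutes with every generator of $b_k^{J_2}$, so the virtual subalgebras carrying the limits $s_{J_1}$ and $s_{J_2}$ commute and satisfy the moment factorization of classical independence. Otherwise, some non-edge $(l_1, l_2) \in J_1 \times J_2$ in $g_L$ witnesses the vanishing rule in every alternating mixed centered product, reproducing the vanishing moments demanded by $h_L$-independence. The main obstacle is precisely this last combinatorial bookkeeping: one must verify that the iterated centering procedure interacts cleanly with both the non-crossing pair partition structure of the free CLT and the graph-theoretic witness structure of $\epsilon$-independence, so that the joint limit matches exactly the prescribed $h_L$-independent semicircular-circular family and not merely in its marginals.
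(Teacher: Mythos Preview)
Your outline---center, expand into $T_n^J$, invoke the main theorem with $w\equiv 0$, then identify the joint limit---is exactly the paper's route. Two points need sharpening. First, Theorem~\ref{theorem: main theorem introduction} as stated concerns $S_n$ only; the \emph{joint} convergence of the family $(T_n^J)_J$ you rely on is Corollary~\ref{corollary: lexicographic joint law}, which moreover hands you an explicit formula for the joint limit moments,
\[
\tau(s_{J_1}^{\alpha_1}\cdots s_{J_p}^{\alpha_p})=\sum_{\pi \in P_2^{J,\alpha}(p)}\prod_{(u,v)\in E(f_\pi)} \mathbf{1}_{J_u\cap J_v=\varnothing}
\]
after plugging in $w\equiv 0$. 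Your heuristic that ``free independence within each $\A_l$ restricts to non-crossing pair partitions'' is imprecise: the $b_k^J$ are not free in $k$ (for $(l_1,l_2)\in E(g_L)$ the generators $\tilde a_k^{(l_1)}$ and $\tilde a_{k'}^{(l_2)}$ commute), and for mixed species the surviving partitions are $(h_L,J)$-noncrossing, not merely noncrossing.

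Second, and this is the real gap you flag yourself, verifying $h_L$-independence by checking the definition on the limit variables is the hard road: the $s_J$ are accessible only through their joint moments, so a ``witness non-edge'' argument cannot be applied to them directly. The paper instead recognizes the displayed formula as the $h_L$-moment-cumulant expansion. The crossing constraint $J_u\cap J_v=\varnothing$, combined with the observation that $(J_1,J_2)\in E(h_L)$ forces $J_1\cap J_2=\varnothing$, shows the sum runs exactly over $\NC_2(h_L,J)$; your (correct) second-moment computations then identify the remaining $\alpha$-dependent factor with $\prod_{\{r,s\}\in\pi}\cumuf_2(s_{J_r}^{\alpha_r},s_{J_r}^{\alpha_s})$. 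At that point Lemma~\ref{lemma: speicher} gives $h_L$-independence in one stroke, bypassing any direct verification of the vanishing rule.
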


Recall that a circular random variable $c$ has a law given by 
    $c=\frac{s+\sqrt{-1}s'}{\sqrt{2}}$, 
where $s,s'$ are free identically distributed semi-circle laws. Further specified to the setting of tensor products, i.e., taking $g_L$ to be the complete graph, we deduce the following corollary. 

\begin{corollary}\label{corollary: classic tensor free case}
    Let $L\geq 1$ be fixed, $\A_1,\ldots,\A_L$ classical independent subalgebras, and $a_1^{(l)},\ldots,a_n^{(l)} \in \A_l$ be self-adjoint free identically distributed random variables, with mean $\lambda$ and variance $\sigma^2$, for all $l \in [L]$. Then,
    \begin{align*}
        S_n=\frac{1}{\sqrt{n}}\sum_{k \in [n]}\big(\bigotimes_{l \in [L]}a_k^{(l)}-\lambda^L\big)
    \end{align*}
    converges in distribution towards $\sum_{J \in \mathcal{P}^*(L)}\lambda^{|J^c|}\sigma^{|J|}s_J$, 
    where the $s_J$'s are $h_L$-independent semi-circle laws with the edge set of $h_L$ given by
    \begin{align*}
        E(h_L):=\{(J_1,J_2)\in \mathcal{P}^*(L): J_1 \cap J_2=\varnothing\}.
    \end{align*}
\end{corollary}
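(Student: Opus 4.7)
The plan is to derive this as a direct specialization of Corollary~\ref{corollary: tensor free case} to the case where $g_L$ is the complete graph $K_L$ on $[L]$. The first step is to observe that classical independence of $\A_1,\ldots,\A_L$ is precisely $g_L$-independence for $g_L=K_L$: condition (1) in the definition of $g_L$-independence demands commuting classical independence for every edge, which is every pair, and condition (2) is vacuous because for any multi-index $i\in [L]^k$ with a repeated entry $i_{j_1}=i_{j_2}$, every intermediate index $j_3$ automatically satisfies $(i_{j_1},i_{j_3})\in E(K_L)$ since $K_L$ has every non-loop edge. Under this identification, the product $\prod_{l\in [L]} a_k^{(l)}$ in the ambient algebra coincides with the tensor $\bigotimes_{l\in[L]} a_k^{(l)}$, so the hypotheses and normalized sum $S_n$ match those of Corollary~\ref{corollary: tensor free case}.

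Next, I would read off each piece of the conclusion of Corollary~\ref{corollary: tensor free case} in this special case. For every nonempty $J\subseteq [L]$, the induced subgraph $K_L[J]=K_{|J|}$ is complete, so by the first bullet, each $s_J$ is a semi-circle random variable (no circular variables appear). The coefficients $\lambda^{|J^c|}\sigma^{|J|}$ carry over verbatim, giving the stated limit $\sum_{J\in \mathcal{P}^*(L)}\lambda^{|J^c|}\sigma^{|J|}s_J$. For the dependence graph $h_L$, the general rule is $E(h_L)=\{(J_1,J_2):J_1\times J_2\subseteq E(g_L)\}$. Since every pair $(i,j)$ with $i\neq j$ lies in $E(K_L)$, the condition $J_1\times J_2\subseteq E(K_L)$ is equivalent to requiring that no element of $J_1$ equals any element of $J_2$, i.e.\ $J_1\cap J_2=\varnothing$, recovering exactly the edge set claimed in the statement.

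There is really no substantial obstacle: the proof is a bookkeeping exercise that amounts to the graph-theoretic simplification of Corollary~\ref{corollary: tensor free case} when $g_L=K_L$. The only point requiring care is the first one, namely the equivalence between classical independence of the $\A_l$'s and $K_L$-independence, because one must confirm that condition (2) of the definition does not impose extra constraints in the complete-graph case. Once this identification is in place, everything else follows by inspection.
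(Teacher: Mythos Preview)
Your approach is correct and essentially identical to the paper's proof, which also specializes Corollary~\ref{corollary: tensor free case} to $g_L=K_L$ and reads off that all induced subgraphs are complete (hence every $s_J$ is semi-circle) and that $J_1\times J_2\subseteq E(K_L)$ iff $J_1\cap J_2=\varnothing$. One small imprecision in the point you flagged: condition~(2) is not literally vacuous for $K_L$, since loops are absent and so $(i_{j_1},i_{j_3})\notin E(K_L)$ whenever $i_{j_3}=i_{j_1}$; the correct observation is that the hypothesis of condition~(2) can only be met by words $i$ with all distinct entries, where classical independence gives $\tau(a_1\cdots a_k)=\prod_j\tau(a_j)=0$ directly --- this is the equivalence the paper states in the introduction without further comment.
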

It is easy to check that when $L=2$, the above recovers Theorem~\ref{theorem: L=2}, unveiling a law consisting of classical and free convolutions of semi-circle random variables. Moreover, note that if $\lambda=0$, then for any $L$, the above limiting law reduces to the semi-circle, extending a phenomenon observed in the case of order-two tensors in \cite{lancien2024centrallimittheoremtensor}.  
More generally, Corollary~\ref{corollary: classic tensor free case} recovers the recent tensor-free central limit theorem established in \cite[Theorem~1.2, Remark 9.3]{nechita2025tensorfreeprobabilitytheory}. The approach in \cite{nechita2025tensorfreeprobabilitytheory} consists of introducing the notion of tensor freeness, defining tensor-free cumulants, and establishing a corresponding moment-cumulant formula. In contrast, we deduce Corollary~\ref{corollary: classic tensor free case} as a special case of a more general framework for sums of products of $\epsilon$-independent variables, leveraging the theory of decorated graphons to formulate and prove a corresponding general limiting theorem (see below). Once the appropriate landscape is set up, the proof is reduced to a direct application of the moment method, which is carefully coupled with graphon convergence.

Theorem \ref{theorem: main theorem introduction} and its Corollaries are inscribed in a yet more general setting, which we elaborate in Section~\ref{sec: master theorem} (see Theorem~\ref{theorem: master theorem}). The general setting considers an independence structure captured by a grid graph over $[n]\times [L]$, while Theorem~\ref{theorem: main theorem introduction} deals with the special case of a grid graph given by the lexicographical product of a graph $g_L$ over $[L]$ and a graph $g_n'$ over $[n]$. We use the so-called decorated graphon theory from \cite{lovasz2022multigraph} to study the convergence of grid graphs, following a route similar to \cite[Theorem 5.1]{CSY}. We refer to Section~\ref{sec: preliminaries} for the precise definitions.

\section{Preliminaries}\label{sec: preliminaries}
In this section, we recall key notions from free probability and graphon theory. We denote by $P(k)$ the set of all partitions of $[k]$, by $P_2(k)$ the set of all pair partitions of $[k]$, by $\NC(k)$ of all noncrossing partition of $[k]$, and by $\NC_2(k)$ the set of all noncrossing pair partitions of $[k]$. More generally, for $T\subseteq P(k)$, we denote
\begin{align*}
    &T_{\le 2}=\{\pi\in T: |v| \le 2, \forall v\in \pi\};
    &T_{\ge 2}=\{\pi\in T: |v| \ge 2, \forall v\in \pi\};\\
    &T_2=T_{\le 2}\cap T_{\ge 2}.
\end{align*}
For a given partition $\pi\in P(p)$ and $r,s\in [p]$, we write $r \sim_\pi s$ if $r,s$ belong to the same block of $\pi$. 

For $n,k \ge 1$, and $i\in [n]^k$, we denote $\ker(i) \in P(k)$ the partition such that $i_k=i_l$ if and only if $k$ and $l$ belong to the same block of $\ker(i)$. 
A central tool in our analysis is the moment-cumulant formula for $\epsilon$-independence \cite{speicherjanusz2016mixture}. Given a simple graph $g_n=([n],E)$ in $n$ nodes, we denote $\NC(g_n,i)$ the set of $(g_n,i)$-noncrossing partitions, which are the partitions $\pi \in P(k)$ satisfying the following two properties
\begin{itemize}
    \item $\pi \le \ker(i)$, meaning that $\pi$ only groups elements that are already connected in $\ker (i)$;
    \item if there exist indices $p_1<p_2<q_1<q_2$ such that $i_{p_1} \nsim_\pi i_{p_2}$, $i_{p_1} \sim_\pi i_{q_1}$, $i_{p_2}\sim_\pi i_{q_2}$, then the edge $(i_{p_1},i_{p_2})$ must be present in $E$.
\end{itemize}  
Finally, we denote the free cumulants of $(a_1,\ldots,a_n)$ by $\cumuf_n(a_1,\ldots,a_n)$.

\begin{lemma}[\cite{speicherjanusz2016mixture}]\label{lemma: speicher}
    Let $n,k \ge 1$, $g_n=([n],E)$ be a simple graph in $n$ nodes and $i \in [n]^k$. Let $\A_1,\ldots,\A_n$ be $g_n$-independent subalgebras, and $a_j \in \A_{i_j}$, for $j \in [k]$. Then
    \begin{align*}
        \tau(a_1\cdots a_k)=\sum_{\pi \in \NC(g_n,i)} \prod_{v=\{r_1,\ldots,r_m\} \in \pi} \cumuf_m(a_{r_1},\ldots,a_{r_m}).
    \end{align*}
\end{lemma}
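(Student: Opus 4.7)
The approach is to show that the right-hand side,
\begin{equation*}
    T(a_1,\ldots,a_k):=\sum_{\pi\in\NC(g_n,i)}\prod_{v=\{r_1,\ldots,r_m\}\in\pi}\cumuf_m(a_{r_1},\ldots,a_{r_m}),
\end{equation*}
satisfies the three universal properties characterizing the joint distribution of any $g_n$-independent family: (i) it reduces to the free moment-cumulant formula when all variables lie in a single subalgebra; (ii) it is invariant under swapping $a_j\leftrightarrow a_{j+1}$ whenever $(i_j,i_{j+1})\in E$; (iii) it vanishes on centered words whose index sequence meets the separation condition of property~(2). These three properties uniquely determine the trace: using (ii) one commutes equal-index positions past intermediate positions that are all edge-connected, collapses adjacent equal-index variables via a centered/non-centered split, and iteratively reduces to moments inside single subalgebras where (i) applies; at any stage where separation is met, (iii) returns zero. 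Since $\tau$ satisfies (i)--(iii) by the very definition of $g_n$-independence, if $T$ does too, then $T=\tau(a_1\cdots a_k)$.

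For (i), if every $a_j\in\A_l$ then $\ker(i)=\{[k]\}$ is unconstraining, and any between-block crossing would demand the forbidden loop $(l,l)\in E$; thus $\NC(g_n,i)=\NC(k)$ and $T$ reduces to the standard free moment-cumulant formula inside $\A_l$. For (ii), when $(i_j,i_{j+1})\in E$ (so in particular $i_j\ne i_{j+1}$, as $g_n$ is loopless), the transposition $\sigma=(j,j+1)$ induces a natural bijection $\pi\mapsto\sigma(\pi)$ between $\NC(g_n,i)$ and $\NC(g_n,i')$. Since $j$ and $j+1$ must lie in distinct blocks (each $\pi$-block has constant index), the cumulants are unchanged by the swap, and any crossing newly created involves precisely the subalgebras $i_j,i_{j+1}$, which are edge-connected, so the image partition remains admissible.

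The principal difficulty lies in (iii). Assuming $\tau(a_j)=0$ for all $j$ and that $i$ satisfies the separation condition, singleton blocks contribute zero, so the claim reduces to the combinatorial statement that no $\pi\in\NC(g_n,i)$ with all blocks of size $\ge 2$ can exist. The plan is to argue this by induction on $k$. Let $B$ be the block of $\pi$ containing position $1$, with second element $r_2$; the case $r_2=2$ is ruled out since $(1,2)$ with $i_1=i_2$ would have no intermediate separator, so $r_2\ge 3$, and separation yields some $p_3\in(1,r_2)$ with $(i_1,i_{p_3})\notin E$. The block $B'\ni p_3$ cannot contain any element $q>r_2$: otherwise $1<p_3<r_2<q$ with $1\sim_\pi r_2$ and $p_3\sim_\pi q$ would produce a crossing demanding the edge $(i_1,i_{p_3})$, absent by construction. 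Hence $B'\subset(1,r_2)$, and one restricts $\pi$ and $i$ to the shorter interval $(1,r_2)$, which still satisfies the separation condition, launching the induction. The main obstacle is that blocks not containing $p_3$ can still leak past $r_2$ through positions with $(i_1,i_\cdot)\in E$, so the peeling must be carried out carefully, treating the outermost block first and alternating interval restriction with reapplication of the separation condition to guarantee the recursion strictly shrinks.
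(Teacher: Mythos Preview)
The paper does not prove this lemma; it is quoted from \cite{speicherjanusz2016mixture} without argument, so there is no in-paper proof to compare against.

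Your overall strategy---verifying that $T$ satisfies the defining rules of $g_n$-independence and then invoking uniqueness---is the natural one, and (i) and (ii) are fine. For (iii), the worry you flag at the end is not a real obstacle: you do not need to restrict the whole partition to $(1,r_2)$, only to follow one block at a time. From $B_0\ni 1$ with second element $b_0$, separation yields $c_0\in(1,b_0)$ with $(i_1,i_{c_0})\notin E$; the crossing condition, applied on both sides and using that every $q\in B_1$ has $i_q=i_{c_0}$, forces the block $B_1\ni c_0$ to lie entirely inside $(1,b_0)$. Iterating with $B_1$ in place of $B_0$ produces a strictly nested sequence of intervals, each shorter by at least two, and hence a contradiction in finitely many steps. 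So the combinatorial core of (iii) is correct once you phrase it as a descent along blocks rather than as a restriction of $\pi$.

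There is, however, a genuine gap in the uniqueness reduction. To shrink a general word you use not only (i)--(iii) but also a \emph{merging} step: once two same-index variables are made adjacent via (ii), you replace $(\ldots,a_j,a_{j+1},\ldots)$ by $(\ldots,a_ja_{j+1},\ldots)$ and recurse on the shorter word. For $\tau$ this is tautological, but for $T$ it is a nontrivial identity,
\[
\sum_{\pi\in\NC(g_n,i)}\cumuf_\pi(a_1,\ldots,a_k)\;=\;\sum_{\pi'\in\NC(g_n,i')}\cumuf_{\pi'}(a_1,\ldots,a_ja_{j+1},\ldots,a_k)\qquad(i_j=i_{j+1}),
\]
whose verification requires the formula for free cumulants with products as arguments together with a check that the induced refinements remain $(g_n,\cdot)$-noncrossing. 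You do not list this as a property of $T$ nor argue it, and without it (i)--(iii) alone are not enough to pin down the functional, so the uniqueness claim as written is incomplete.
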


As mentioned at the end of the introduction, we will make use of the notion of decorated graphon \cite{lovasz2022multigraph}, which we will now formally introduce. 

Let $L\ge 1$, $\mathcal{P}^*(L)$ be the collection of nonempty subsets of $[L]$ and $\mathcal{P}(L)$ be the collection of subsets of $[L]$. We denote $d_L=|\mathcal{P}(L)|=2^L$. Let $\mathcal{M}_L$ be the space of $[0,1]$-valued $d_L\times d_L$ matrices. We equip this space with the inner product  and Hilbert-Schmidt norm
\begin{align*}
    \langle S,T\rangle=\tr(ST^*)=\sum_{I,J\in \mathcal{P}(L)}S_{IJ}T_{IJ}\, ;\quad 
    \norm{S}_{\operatorname{HS}}^2=\sum_{I,J\in \mathcal{P}(L)}S_{IJ}^2.
\end{align*}
An $\mathcal{M}_L$-decorated graphon $w$ is a map $w:[0,1]^2\to \mathcal{M}_L$ satisfying the symmetry condition $\norm{w(x,y)}_{\operatorname{HS}}=\norm{w(y,x)}_{\operatorname{HS}}$ for every $x,y\in [0,1]$. An $\mathcal{M}_L$-decorated graph is a pair $G=(g,\beta)$, where:  
\begin{itemize}
    \item $g=(V(g),E(g))$ is a graph;
    \item $\beta: E(g)\to \mathcal{M}_L$ is a map that assigns a matrix to each edge. 
\end{itemize}
Given such a decorated graph, we can define the associated decorated graphon by
\begin{align*}
    w_G(x,y)=\beta(i,j), \quad \text{for }(x,y)\in \left[\frac{i-1}{n},\frac{i}{n}\right)\times \left[\frac{j-1}{n},\frac{j}{n}\right).
\end{align*}
Given an $\mathcal{M}_L$-decorated graph $F=(f,\beta)$ and an $\mathcal{M}_L$-decorated graphon $w$, we define the homomorphism density
\begin{align*}
    \rho(F,w)=\int_{[0,1]^{V(f)}}\prod_{(u,v)\in E(f)}\langle \beta(u,v),w(x_u,x_v)\rangle\,\text{d}x. 
\end{align*}
When the graphon $w$ is induced by a decorated graph $G$ i.e. $w=w_G$, this simplifies to 
\begin{align*}
    &\rho(F,G):=\rho(F,w_G)=\frac{\hom(F,G)}{|V(g)|^{|V(f)|}};\\
    &\hom(F,G)=\sum_{\phi:V(f)\to V(g)}\prod_{e\in E(f)}\langle \beta(e),\gamma(\phi(e))\rangle,
\end{align*}
where $\phi(e)=(\phi(u),\phi(v))$, for $e=(u,v)$. 
Similarly, the injective homomorphism density is defined as
\begin{align*}
    &\homo_{\operatorname{inj}}(F,G)=\sum_{\phi:V(f)\hookrightarrow V(g)}\prod_{e\in E(f)}\langle \beta(e),\gamma(\phi(e))\rangle;\\
    &\rho_{\operatorname{inj}}(F,G)=\frac{\homo_{\operatorname{inj}}(F,G)}{|V(g)|^{|V(f)|}},
\end{align*}
where $\phi: V(f) \hookrightarrow V(g)$ denotes an injective map.

A sequence of $\mathcal{M}_L$-decorated graphons $(w_n)_{n\geq 1}$ is said to converge to an $\mathcal{M}_L$-decorated graphon $w$, denoted by $w_n\to w$, if $\rho(F,w_n)\to \rho(F,w)$ for every $\mathcal{M}_L$-decorated graph $F$ (see \cite[Theorem 3.7]{lovasz2022multigraph} for more details). This convergence can be expressed using the cut distance $\delta_{\Box}$ defined as:
\begin{align*}
    &\|w\|_{\Box}:=\sup_{f,g:[0,1]\to [0,1]}\left\|\int_{[0,1]^2}w(x,y) f(x)g(y)\text{d}x\text{d}y\right\|_{\operatorname{HS}};\\
    &\delta_{\Box}(w_1,w_2):=\inf_{\phi}\norm{w_1-w_2\circ \phi}_{\Box},
\end{align*}
where the infimum is over measure-preserving transformations $\phi:[0,1]\to [0,1]$ satisfying $w\circ\phi (x,y):=w(\phi(x),\phi(y))$.
 This cut distance provides a criterion for the convergence $w_n\to w$. The proof of continuity for the mapping  $w\mapsto \rho(F,w)$ under $\delta_{\Box}$ is detailed in \cite[Section 2]{lovasz2022multigraph}.

In our setting, we utilize decorated graphons in a specific way. Given a graph $f=(V(f),E(f))$ and $J\in \mathcal{P}^*(L)^{V(f)}$, we define its $\mathcal{M}_L$-decorated graph $F_f^J=(f,\beta_J)$ where the decoration map is given by 
\begin{align*}
    \beta_J(u,v)=\mathbf{P}_{J_u,J_v},\; \text{for } (u,v)\in E(f),
\end{align*}
where $\mathbf{P}_{I,J}$ denotes the canonical basis element in $\mathcal{M}_L$. 
For an intersection graph $f_\pi$ of a pair partition $\pi\in P_2(p)$ and a word $\alpha\in \{1,*\}^p$, we define $F_{f_\pi}^{J,\alpha}=(\Tilde{f}_\pi,\beta_J)$ where $\Tilde{f}_\pi$ extends $f_\pi$ by adding an extra vertex $\xi$ such that
\begin{subequations}
\label{eq: decorated intersection graph}
    \begin{align}
    &V(\Tilde{f}_\pi)=V(f_\pi)\sqcup \{\xi\}\\
    &E(\Tilde{f}_\pi)=E(f_\pi)\sqcup \{(\{r,s\},\xi): \{r,s\}\in \pi, \alpha_r=\alpha_s\}\\
    &\beta_J(u,\xi)=\mathbf{P}_{J_u\varnothing };\quad \beta_J(\xi,u)=\mathbf{P}_{ \varnothing J_u}.
\end{align}
\end{subequations}

For a graph $g_n=([n]\times [L],E(g_n))$, we define an $\mathcal{M}_L$-decorated graph $G_{g_n}=(V(G_{g_n}),E(G_{g_n}),\gamma_n)$ where $(V(G_{g_n}),E(G_{g_n}))$ corresponds to the complete graph $K_n$ over $n$ vertices. The decoration map $\gamma_n$ is given for $u\ne v\in [n]$ by
\begin{subequations}
\label{eq: decorated compressed grid}
\begin{align}
    &(\gamma_n(u,v))_{J_1J_2}=\prod_{(l_1,l_2)\in J_1\times J_2}\mathbf{1}_{((u,l_1),(v,l_2))\in E(g_n)}\\
    &(\gamma_n(u,v))_{J\varnothing}=\prod_{\substack{(l_1,l_2)\in J\times J\\ l_1\ne l_2}}\mathbf{1}_{((u,l_1),(u,l_2))\in E(g_n)}\\
    &(\gamma_n(u,v))_{\varnothing J}=\prod_{\substack{(l_1,l_2)\in J\times J\\ l_1\ne l_2}}\mathbf{1}_{((v,l_1),(v,l_2))\in E(g_n)}\\
    &(\gamma_n(u,v))_{\varnothing \varnothing}=1,
\end{align}
\end{subequations}
for $J_1,J_2,J\in \mathcal{P}^*(L)$.
Finally note that $\norm{\gamma_n(u,v)}_{\operatorname{HS}}=\norm{\gamma_n(v,u)}_{\operatorname{HS}}$. The following lemma allow us to compute $\rho_{\operatorname{inj}}(F^{J,\alpha}_{f_\pi},G_{g_n})$.
\begin{lemma}\label{lemma: rho computation}
    Let $\pi\in P_2(p)$, $\alpha \in \{1,*\}^p$, $J\in [\mathcal{P}^*(L)]^{V(f_\pi)}$ and $g_n=([n]\times [L],E(g_n))$. Let $G_{g_n}=(K_n,\gamma_n)$. Then
    \begin{align*}
    \rho_{\operatorname{inj}}(F^{J,\alpha}_{f_\pi},G_{g_n})&=\left(\frac{n-p/2}{n}\right)\frac{1}{n^{p/2}}\sum_{\phi: V(f_\pi)\hookrightarrow [n]}\prod_{\substack{u=\{r,s\}\in V(f_\pi)\\ \alpha_r=\alpha_s}}\left(\prod_{\substack{(l_1,l_2)\in J_u^2\\l_1\ne l_2}}\mathbf{1}_{(\phi(u),l_1),(\phi(u),l_2))\in E(g_n)}\right) \\
    &\times\prod_{(u,v)\in E(f_\pi)}\left(\prod_{(l_1,l_2)\in J_u\times J_v}\mathbf{1}_{(\phi(u),l_1),\phi(v),l_2))\in E(g_n)}\right).
    \end{align*}
\end{lemma}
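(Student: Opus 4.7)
The plan is to unfold the definitions of $\rho_{\operatorname{inj}}$, $F^{J,\alpha}_{f_\pi}$, and $G_{g_n}$, and track precisely which factors in the resulting product depend on the extra vertex $\xi$ introduced in \eqref{eq: decorated intersection graph}. First I would write
\begin{align*}
\rho_{\operatorname{inj}}(F^{J,\alpha}_{f_\pi},G_{g_n}) = \frac{1}{n^{p/2+1}} \sum_{\phi: V(\Tilde{f}_\pi) \hookrightarrow [n]} \prod_{e \in E(\Tilde{f}_\pi)} \langle \beta_J(e), \gamma_n(\phi(e)) \rangle,
\end{align*}
using that $|V(\Tilde f_\pi)| = |V(f_\pi)| + 1 = p/2 + 1$, and recalling that the inner product of a canonical basis element $\mathbf{P}_{I_1, I_2}$ with a matrix $M \in \mathcal{M}_L$ picks out the single entry $M_{I_1 I_2}$.

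Next I would split the product over $E(\Tilde f_\pi)$ according to the decomposition in \eqref{eq: decorated intersection graph}, separating the edges inherited from $E(f_\pi)$ from those incident to $\xi$. For $e=(u,v) \in E(f_\pi)$, the decoration $\beta_J(u,v)=\mathbf{P}_{J_u,J_v}$ combined with \eqref{eq: decorated compressed grid}(a) produces exactly the product over $(l_1,l_2)\in J_u\times J_v$ of indicators appearing on the right-hand side. For an edge $e=(u,\xi)$ with $u=\{r,s\}$ and $\alpha_r=\alpha_s$, the decoration $\beta_J(u,\xi)=\mathbf{P}_{J_u,\varnothing}$ pairs with the $(J_u,\varnothing)$ entry of $\gamma_n(\phi(u),\phi(\xi))$, which by \eqref{eq: decorated compressed grid}(b) records only intra-vertex edges of the grid $g_n$ at $\phi(u)$; crucially this entry does not depend on $\phi(\xi)$.

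The key step is then the observation that the entire summand depends on $\phi$ only through its restriction $\phi|_{V(f_\pi)}$. With this restriction fixed, the vertex $\xi$ can be mapped to any of the remaining $n-p/2$ elements of $[n]$ (injectivity being the only constraint), contributing a combinatorial factor of exactly $n-p/2$. After factoring this out and rewriting $(n-p/2)/n^{p/2+1} = \frac{n-p/2}{n}\cdot\frac{1}{n^{p/2}}$, and reindexing the remaining sum over $\phi: V(f_\pi)\hookrightarrow [n]$, the stated formula follows.

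I do not anticipate any real obstacle here: the argument is bookkeeping on \eqref{eq: decorated intersection graph} and \eqref{eq: decorated compressed grid}. The one point that merits emphasis is that the decorations of the $\xi$-edges were designed precisely so that their contribution decouples from $\phi(\xi)$, which is what produces the combinatorial prefactor $(n-p/2)/n$ and ensures that the remaining sum is indexed by injections of $V(f_\pi)$ alone.
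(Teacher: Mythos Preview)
Your proposal is correct and follows essentially the same approach as the paper: unfold the definition of $\rho_{\operatorname{inj}}$ as a sum over injections of $V(\Tilde f_\pi)$ into $[n]$, compute the inner products using \eqref{eq: decorated compressed grid}, observe that the $\xi$-edge contributions do not depend on $\phi(\xi)$, and sum over the $n-p/2$ available choices for $\phi(\xi)$. The paper's proof differs only in presentation order (it first treats the $\xi$-edges, then sums over $\phi(\xi)$, and afterwards handles the $E(f_\pi)$-edges), which is immaterial.
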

\begin{proof}
    We write
    \begin{align*}
        \rho_{\operatorname{inj}}(F^{J,\alpha}_{f_\pi},G_{g_n})=\frac{1}{n^{1+p/2}}\sum_{\phi: V(f_\pi)\sqcup \{\xi\}\hookrightarrow [n]}\prod_{(u,v)\in E(\Tilde{f}_\pi)}\langle \beta_J(u,v),\gamma_n(\phi(u),\phi(v))\rangle.
    \end{align*}
    We first consider an edge $(u,\xi)\in E(\Tilde{f}_\pi)$. By definition, we have $u=\{r,s\}$ with $\alpha_r=\alpha_s$. We have
    \begin{align*}
        \langle \beta_J(u,\xi),\gamma(\phi(u),\phi(\xi))\rangle&= (\gamma_n(\phi(u),\phi(\xi)))_{J\varnothing}\\
        &=\prod_{\substack{(l_1,l_2)\in J_u^2\\ l_1\ne l_2}}\mathbf{1}_{((\phi(u),l_1),(\phi(u),l_2))\in E(g_n)}.
    \end{align*}
    In particular, the inner product does not depend on $\phi(\xi)$. Summing over $\phi(\xi)\in [n]\setminus\{\phi(u):u\in V(f_\pi)\}$, we have
    \begin{align*}
         \rho(F^{J,\alpha}_{f_\pi},\gamma_n)&=\left(\frac{n-p/2}{n}\right)\frac{1}{n^{p/2}}\sum_{\phi: V(f_\pi)\hookrightarrow [n]}\prod_{\substack{u=\{r,s\}\in V(f_\pi)\\ \alpha_r=\alpha_s}}\left(\prod_{\substack{(l_1,l_2)\in J_u^2\\l_1\ne l_2}}\mathbf{1}_{(\phi(u),l_1),(\phi(u),l_2))\in E(g_n)}\right)\\
         &\times\prod_{(u,v)\in E(f_\pi)}\langle \beta_J(u,v),\gamma_n(\phi(u),\phi(v))\rangle.
    \end{align*}
    For an edge $(u,v)\in E(f_\pi)$, we have
    \begin{align*}
        \langle \beta_J(u,v),\gamma(\phi(u),\phi(v))\rangle&= (\gamma_n(\phi(u),\phi(v)))_{J_uJ_v}\\
        &=\prod_{(l_1,l_2)\in J_u\times J_v}\mathbf{1}_{((\phi(u),l_1),(\phi(v),l_2))\in E(g_n)}.
    \end{align*}
    The result follows.
\end{proof}

\section{General theorem via decorated graphons}\label{sec: master theorem}

We now present the main result of this section, which provides a general limit theorem for sums of products indexed by grid graphs. The theorem applies to $g_n$-independent random variables and describes their asymptotic behavior through decorated graphon convergence. In this section, we will demonstrate how it leads to the main result of the paper, as stated in the introduction, along with its corollaries. The proof of the theorem will be presented in the following section.

We say that a random variable $a$ is normalized if $\tau(a)=0$ and $\tau(a^2)=1$. For $J\in \mathcal{P}^*(L)^p$, we denote
\begin{align*}
    P_2^J(p):=\{\pi\in P_2(p): \forall \{r,s\}\in \pi, J_r=J_s\}=\{\pi\in P_2(p): \pi \le \ker(J)\}.
\end{align*}
For $\pi \in P_2^J(p)$, we denote $J_u:=J_r=J_s$, for $u=\{r,s\}\in \pi$. 
\begin{theorem}\label{theorem: master theorem}
    Let $g_n=([n]\times[L],E(g_n))$ be a grid graph such that $G_{g_n}$ converges in the decorated-graphon sense to a decorated graphon $w$. Let $a\in \A$ be a self-adjoint normalized random variable and consider a family of $g_n$-independent and identically distributed random variables $(a_k^{(l)})_{k\in [n],l\in [L]}$ with common law $a$. Define
    \begin{align*}
        S_n^{(J)}=\frac{1}{\sqrt{n}}\sum_{k\in [n]}b_k^{(J)};\quad b_k^{(J)}=\prod_{l\in J}a_k^{(l)},
    \end{align*}
    for $J\in \mathcal{P}^*(L)$. 
    Then, the family $(S_n^{(I)})_{I\in \mathcal{P}^*(L)}$ converges in distribution to a family $(s_I)_{I\in \mathcal{P}^*(L)}$, whose joint distribution depends only on $w$. Moreover, for any $p\ge 1$, $J\in \mathcal{P}^*(L)^p$ and $\alpha\in \{1,*\}^p$, we have
    \begin{align*}
        \tau(s_{J_1}^{\alpha_1}\cdots s_{J_p}^{\alpha_p})=\sum_{\pi \in P_2^{J}(p)}\rho(F_{f_\pi}^{J,\alpha},w).
    \end{align*}
\end{theorem}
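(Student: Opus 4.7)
The plan is to compute mixed $\ast$-moments by the moment method and translate the combinatorial count of leading contributions into the decorated-graphon density on the right-hand side. Fix $p\ge 1$, $J\in\mathcal{P}^*(L)^p$ and $\alpha\in\{1,\ast\}^p$. Expanding each $S_n^{(J_j)}$ and writing $(b_{k_j}^{(J_j)})^{\alpha_j}$ as an ordered product of the $|J_j|$ self-adjoint variables $a_{k_j}^{(l)}$, $l\in J_j$ (the order being reversed when $\alpha_j=\ast$), we obtain
\[
\tau\bigl(S_n^{(J_1)\alpha_1}\cdots S_n^{(J_p)\alpha_p}\bigr)=\frac{1}{n^{p/2}}\sum_{k\in[n]^p}\tau(a_{i_1}\cdots a_{i_M}),
\]
where $M=\sum_j|J_j|$ and $i_1,\ldots,i_M\in[n]\times[L]$ is the list of subalgebra indices read along the word, each $i_t$ being of the form $(k_j,l)$ for some $j\in[p]$ and $l\in J_j$.

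Applying Lemma \ref{lemma: speicher} expresses each inner trace as $\sum_{\sigma\in\NC(g_n,i)}\prod_{v\in\sigma}\cumuf_{|v|}(a,\ldots,a)$. Normalisation of $a$ gives $\cumuf_1(a)=0$ and $\cumuf_2(a,a)=1$, so only partitions $\sigma$ with blocks of size at least two contribute. The constraint $\sigma\le\ker(i)$ forces two positions in a common block of $\sigma$ to share the same subalgebra index $(k,l)$, hence the same $J$-coordinate and the same $k$-coordinate. Projecting $\sigma$ to its $j$-coordinate produces a partition $\pi\in P(p)$ satisfying $J_r=J_s$ whenever $r\sim_\pi s$; matching the $n^{-p/2}$ prefactor against the $n^{b(\pi)}(1+o(1))$ admissible configurations of $k\in[n]^p$ forces $b(\pi)\ge p/2$, and combined with $\sum_{v\in\sigma}|v|=M$ and $|v|\ge 2$ the only order-$1$ contributions come from $\pi\in P_2^J(p)$ together with the unique pair refinement of the word that matches, for each $\{r,s\}\in\pi$, every $l\in J_u:=J_r=J_s$ inside $b_{k_r}^{(J_u)}$ with its copy inside $b_{k_s}^{(J_u)}$. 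All other contributions are $o(1)$ by a standard moment-method argument in the spirit of the proof of Theorem \ref{theorem: case L=1}.

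It remains to translate the $(g_n,i)$-noncrossing condition on such $\sigma$ into edge constraints in $g_n$. For an edge $(u,v)\in E(f_\pi)$, corresponding to a crossing of blocks $u=\{r,s\}$, $v=\{r',s'\}$ of $\pi$, every pair $(l_1,l_2)\in J_u\times J_v$ produces a crossing of word-level pair blocks, forcing an edge between $(k_r,l_1)$ and $(k_{r'},l_2)$; this is exactly the entry $(\gamma_n(\phi(u),\phi(v)))_{J_uJ_v}$ in \eqref{eq: decorated compressed grid}, matched through the inner product with $\beta_J(u,v)=\mathbf{P}_{J_uJ_v}$. Moreover, for $u=\{r,s\}\in\pi$ with $\alpha_r=\alpha_s$, the two factors $b_{k_r}^{(J_u)}$ and $b_{k_s}^{(J_u)}$ appear in the word with matching rather than reversed internal $l$-ordering, so for $|J_u|\ge 2$ the natural intra-block pairings $l\leftrightarrow l$ pairwise cross, forcing edges between $(k_r,l_1)$ and $(k_r,l_2)$ for all distinct $l_1,l_2\in J_u$: this is precisely $(\gamma_n(\phi(u),\cdot))_{J_u\varnothing}$, matched via $\beta_J(u,\xi)=\mathbf{P}_{J_u\varnothing}$ with the auxiliary $\xi$-edges added in \eqref{eq: decorated intersection graph}. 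Writing $\phi\colon V(f_\pi)\hookrightarrow[n]$ for the injection sending each pair block $u=\{r,s\}\in\pi$ to the common value $k_r=k_s$, Lemma \ref{lemma: rho computation} identifies the surviving contribution of each $\pi\in P_2^J(p)$ with $\rho_{\operatorname{inj}}(F^{J,\alpha}_{f_\pi},G_{g_n})$.

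Finally, the difference between injective and general homomorphism densities of a fixed decorated graph is $O(1/n)$, and the decorated-graphon convergence $G_{g_n}\to w$ gives $\rho(F^{J,\alpha}_{f_\pi},G_{g_n})\to\rho(F^{J,\alpha}_{f_\pi},w)$ for each $\pi\in P_2^J(p)$; summing over the finite set $P_2^J(p)$ delivers the announced formula. The main obstacle is the bookkeeping in the previous paragraph: one must verify carefully that the two families of edge constraints coming from the $(g_n,i)$-noncrossing condition---inter-block constraints from crossings in $f_\pi$ and intra-block constraints triggered by $\alpha_r=\alpha_s$---assemble precisely into the decoration $\beta_J$ of $F^{J,\alpha}_{f_\pi}$, including the role of the auxiliary vertex $\xi$. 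The remaining error-term analysis (blocks of size $\ge 3$, non-injective configurations, coarser $\pi$) follows the template of \cite{CSY} and is comparatively routine.
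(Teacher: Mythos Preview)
Your proposal is correct and follows essentially the same route as the paper's Section~4: expand the mixed moment, apply Lemma~\ref{lemma: speicher}, reduce to pair partitions $\pi\in P_2^J(p)$ with the unique word-level partition $\sigma=\ker(\theta)$, translate the $(g_n,\theta)$-noncrossing condition into the two families of edge constraints (this is the paper's Lemma~\ref{lem: characterization of theta partitions}), and identify the surviving sum with $\rho_{\operatorname{inj}}(F^{J,\alpha}_{f_\pi},G_{g_n})$ via Lemma~\ref{lemma: rho computation} before passing to the graphon limit.

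One small slip to fix: the clause ``hence the same $J$-coordinate'' is not justified---two word positions sharing the subalgebra index $(k,l)$ only forces $l\in J_{j_1}\cap J_{j_2}$, not $J_{j_1}=J_{j_2}$. The correct argument (the paper's Lemma~\ref{lemma: contribution of P_2^J}) is that once $\pi\in P_2(p)$ and $\{r,s\}\in\pi$, any $l\in J_r\setminus J_s$ leaves the corresponding word position unmatched, producing a singleton in $\ker(\theta)$ and killing the term; this is what actually forces $\pi\in P_2^J(p)$. With that step repaired, your sketch coincides with the paper's proof.
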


The main theorem presented in the introduction focuses on a specific case where the grid graph is defined as the lexicographic product of graphs. In this setting, it establishes the limiting distribution of $S_n$. In contrast, the more general theorem stated above extends to the joint distribution of all partial sums. 

Let $g_L$ be a graph over $[L]$, and let $g'_n$ be a graph over $[n]$. We consider the lexicographical product graph $g_{n,L}=g'_n\cdot g_L$, whose vertex set is $[n]\times [L]$ and edge set
\begin{align*}
    E(g_{n,L})=\{((i,l),(j,l)): (i,j)\in E(g'_n))\}\sqcup\{((i,l_1),(j,l_2)): (l_1,l_2)\in E(g_L)\}.
\end{align*}
In this construction, each layer $l$ (corresponding to a fixed value in $[L]$) inherits the edges of $g_n'$, while edges between layers follow the structure of $g_L$ (see Figure \ref{fig: lexicographic product} for an example $n=L=3$).
\begin{figure}[H]
     \centering
     \begin{subfigure}[b]{0.3\textwidth}
         \centering
         \begin{tikzpicture}[scale=0.50]
            \node at (0,0) (A) {};
            \fill[black] (A) circle(1.5pt);
            \node at (0,1) (B) {};
            \fill[black] (B) circle(1.5pt);
            \node at (0,2) (C) {};
            \fill[black] (C) circle(1.5pt);

            \draw[-] (0,0) edge (0,1);
            \draw[-] (0,1) edge (0,2);
        \end{tikzpicture}
        \caption{Line graph $g_L$.}
    \end{subfigure}
     \hfill
     \begin{subfigure}[b]{0.3\textwidth}
         \centering
         \begin{tikzpicture}[scale=0.50]
            \node at (0,0) (A) {};
            \fill[black] (A) circle(1.5pt);
            \node at (1,0) (B) {};
            \fill[black] (B) circle(1.5pt);
            \node at (2,0) (C) {};
            \fill[black] (C) circle(1.5pt);

            \draw[-] (1,0) edge (2,0);
            \draw[-] (0,0) edge [bend left=30] (2,0);
        \end{tikzpicture}
         \caption{Graph $g'_n$.}
     \end{subfigure}
     \hfill
     \begin{subfigure}[b]{0.3\textwidth}
         \centering
         \begin{tikzpicture}[scale=0.50]
            \node at (0,0) (A) {};
            \fill[black] (A) circle(1.5pt);
            \node at (0,1) (B) {};
            \fill[black] (B) circle(1.5pt);
            \node at (0,2) (C) {};
            \fill[black] (C) circle(1.5pt);

            \node at (1,0) (A) {};
            \fill[black] (A) circle(1.5pt);
            \node at (1,1) (B) {};
            \fill[black] (B) circle(1.5pt);
            \node at (1,2) (C) {};
            \fill[black] (C) circle(1.5pt);

            \node at (2,0) (A) {};
            \fill[black] (A) circle(1.5pt);
            \node at (2,1) (B) {};
            \fill[black] (B) circle(1.5pt);
            \node at (2,2) (C) {};
            \fill[black] (C) circle(1.5pt);

            \foreach \x in {0,1,2}
            \foreach \y in {0,1,2}
              {
                \draw[-] (\x,0) edge (\y,1);
                \draw[-] (\x,1) edge (\y,2);
              }
            \draw[-] (1,0) edge (2,0);
            \draw[-] (0,0) edge [bend right=30] (2,0);        

            \draw[-] (1,1) edge (2,1);
            \draw[-] (0,1) edge [bend left=20] (2,1);   

            \draw[-] (1,2) edge (2,2);
            \draw[-] (0,2) edge [bend left=30] (2,2);   
        \end{tikzpicture}
         \caption{Product $g'_n\cdot g_L$.}
     \end{subfigure}
        \caption{Example of lexicographical product.}
        \label{fig: lexicographic product}
\end{figure}
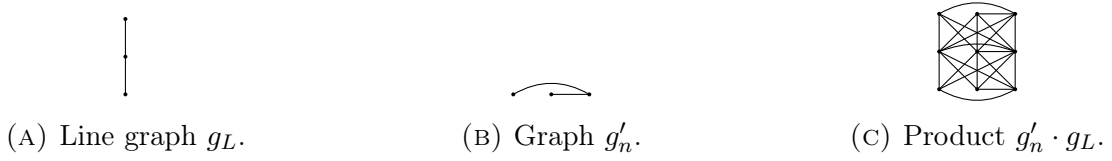
In this case, the variables $(a_k^{(l)})_{k\in [n]}\subset \A_l$ are $g'_n$-independent while the subalgebras $(\A_l)_{l\in [L]}$ are $g_L$-independent. Let $J\in \mathcal{P}^*(L)^p$ and $\alpha\in \{1,*\}^p$, and define $P^{J,\alpha}(p)$ as the subset of partitions $\pi\in P^J_2(p)$  that satisfy the following conditions: 
\begin{enumerate}
    \item For every $(u,v)\in E(f_\pi)$, we have $(l_1,l_2)\in E(g_L)$ for all $(l_1,l_2)\in J_u\times J_v$ with $l_1\ne l_2$;
    \item For every block $u=\{r,s\}\in \pi$ such that $\alpha_r=\alpha_s$, the restriction $g_L|_{J_u}$ is a complete graph, meaning that $(l_1,l_2)\in E(g_L)$ for all distinct $l_1,l_2\in J_u$.
\end{enumerate}
Specifying to this example, Theorem \ref{theorem: master theorem} yields the following.
\begin{corollary}\label{corollary: lexicographic joint law}
    Let $g_L$ be a graph over $[L]$, $g'_n$ be a graph over $[n]$, and $a\in \A$ be a normalized random variable. Assume that $g'_n$ converges to a graphon $w$ in the graphon sense. Let $(\A_l)_{l\in [L]}$ be $g_L$-independent subalgebras and $(a_k^{(l)})_{k\in [n]}\subset \A_l$ be self-adjoint $g'_n$-independent and identically distributed random variables with common law $a$. Define
    \begin{align*}
        S_n^{(J)}=\frac{1}{\sqrt{n}}\sum_{k\in [n]}b_k^{(J)},\quad \text{where } b_k^{(J)}=\prod_{l\in J}a_k^{(l)}.
    \end{align*}
    Then, the family $(S_n^{(I)})_{I\in \mathcal{P}^*(L)}$ converges in distribution to a family $(s_I)_{I\in \mathcal{P}^*(L)}$, whose joint law $\nu_{g_L,w}$ depends only on $g_L$ and $w$. 
    
    Moreover, for any $p\ge 1$, $J\in \mathcal{P}^*(L)^p$ and $\alpha\in \{1,*\}^p$, we have
    \begin{equation}\label{eq: def joint sJ}
        \tau(s_{J_1}^{\alpha_1}\cdots s_{J_p}^{\alpha_p})=\sum_{\pi \in P_2^{J,\alpha}(p)}\int_{[0,1]^{V(f_\pi)}}\prod_{(u,v)\in E(f_\pi)} w^*_{J_uJ_v}(x_u,x_v)\,\text{d}x,
    \end{equation}
    where
    \begin{align*}
        w^*_{J_1J_2}(x,y)=\begin{cases}
            w(x,y);& \text{ if } J_1\cap J_2 \ne \varnothing;\\
            1;& \text{ otherwise}.
        \end{cases}
    \end{align*}
\end{corollary}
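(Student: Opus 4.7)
The plan is to specialize Theorem~\ref{theorem: master theorem} to the grid graph $g_{n,L} = g'_n \cdot g_L$, which requires two things: (i) identifying the decorated graphon limit $W^*$ of $G_{g_{n,L}}$ in terms of $w$ and $g_L$, and (ii) unpacking the expression $\rho(F^{J,\alpha}_{f_\pi}, W^*)$ and checking that it collapses to the integral announced in \eqref{eq: def joint sJ}, with the indicator of $\pi \in P_2^{J,\alpha}(p)$ appearing naturally.

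First I would compute the decoration matrix $\gamma_n(u,v)$ using \eqref{eq: decorated compressed grid}. The lexicographic product structure says that for $u \ne v$, the edge $((u,l_1),(v,l_2))$ is present iff either $l_1 = l_2$ and $(u,v) \in E(g'_n)$, or $l_1 \ne l_2$ and $(l_1,l_2) \in E(g_L)$; and the intra-vertex edge $((u,l_1),(u,l_2))$ with $l_1 \ne l_2$ is present iff $(l_1,l_2) \in E(g_L)$. Consequently, writing $c_{J_1,J_2}(g_L) = \prod_{(l_1,l_2)\in J_1\times J_2,\; l_1\ne l_2}\mathbf{1}_{(l_1,l_2)\in E(g_L)}$, one obtains
\[
(\gamma_n(u,v))_{J_1J_2} = \mathbf{1}_{(u,v)\in E(g'_n)}^{\mathbf{1}_{J_1\cap J_2 \ne \varnothing}} \cdot c_{J_1,J_2}(g_L),\qquad (\gamma_n(u,v))_{J\varnothing} = (\gamma_n(u,v))_{\varnothing J} = \mathbf{1}_{g_L|_{J}\text{ complete}}.
\]
Since the entries only depend on $n$ through the factor $\mathbf{1}_{(u,v)\in E(g'_n)}$, the hypothesis $g'_n \to w$ in the (scalar) graphon sense promotes directly to the decorated-graphon convergence $G_{g_{n,L}} \to W^*$, where $(W^*(x,y))_{J_1J_2}$ equals $c_{J_1,J_2}(g_L)\cdot w^*_{J_1J_2}(x,y)$ on the $\mathcal{P}^*(L)\times \mathcal{P}^*(L)$ block, and equals $\mathbf{1}_{g_L|_{J}\text{ complete}}$ on the mixed $J\varnothing$ and $\varnothing J$ entries.

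Second, Theorem~\ref{theorem: master theorem} gives joint convergence of $(S_n^{(I)})_I$ and the moment formula $\tau(s_{J_1}^{\alpha_1}\cdots s_{J_p}^{\alpha_p}) = \sum_{\pi\in P_2^J(p)} \rho(F^{J,\alpha}_{f_\pi},W^*)$. Using the definition of $F^{J,\alpha}_{f_\pi}$ from \eqref{eq: decorated intersection graph}, each inner product $\langle \beta_J(u,v), W^*(x_u,x_v)\rangle$ with $(u,v)\in E(f_\pi)$ extracts the entry $(W^*(x_u,x_v))_{J_uJ_v} = c_{J_u,J_v}(g_L)\,w^*_{J_uJ_v}(x_u,x_v)$, while each inner product for an edge $(u,\xi)$ (arising when $u=\{r,s\}\in \pi$ with $\alpha_r = \alpha_s$) extracts $\mathbf{1}_{g_L|_{J_u}\text{ complete}}$, independently of $x_\xi$.

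The last step is to match these factors with the definition of $P_2^{J,\alpha}(p)$: the $c_{J_u,J_v}(g_L)$-factors over $(u,v)\in E(f_\pi)$ vanish unless condition (1) holds, and the $\mathbf{1}_{g_L|_{J_u}\text{ complete}}$-factors vanish unless condition (2) holds. Integrating out the dummy coordinate $x_\xi$ (which contributes a factor $1$), one obtains
\[
\rho(F^{J,\alpha}_{f_\pi},W^*) = \mathbf{1}_{\pi \in P_2^{J,\alpha}(p)} \int_{[0,1]^{V(f_\pi)}} \prod_{(u,v)\in E(f_\pi)} w^*_{J_uJ_v}(x_u,x_v)\,\mathrm{d}x,
\]
and summing over $\pi\in P_2^J(p)$ yields \eqref{eq: def joint sJ}. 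The only genuine bookkeeping obstacle is the careful factorization of $\gamma_n(u,v)$ into a constant $g_L$-part and an $n$-dependent $g'_n$-part, since this is what guarantees both the decorated-graphon convergence and the clean split between the two conditions defining $P_2^{J,\alpha}(p)$.
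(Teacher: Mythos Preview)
Your proposal is correct and follows essentially the same route as the paper's proof: both compute the decoration $\gamma_n$ of $G_{g'_n\cdot g_L}$ by splitting the lexicographic edge condition into the $l_1=l_2$ part (producing the $w^*_{J_1J_2}$ factor) and the $l_1\ne l_2$ part (producing the constant $g_L$-dependent factor $c_{J_1,J_2}(g_L)$), pass to the decorated-graphon limit via $g'_n\to w$, apply Theorem~\ref{theorem: master theorem}, and then read off from the inner products that the constant factors exactly encode the indicator $\mathbf{1}_{\pi\in P_2^{J,\alpha}(p)}$ while the $x$-dependent part gives the stated integral. The only notational difference is your shorthand $c_{J_1,J_2}(g_L)$ and $W^*$ versus the paper's $\gamma$; the structure of the argument is identical.
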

\begin{proof}
Denote $g_n=g'_n\cdot g_L$. Given $J_1, J_2 \in \mathcal{P}^*(L)$ we have 
\begin{align*}
    \prod_{(l_1,l_2)\in J_1\times J_2} \mathbf{1}_{((i,l_1),(j,l_2))\in E(g_n)}&=\prod_{l\in J_1\cap J_2} \mathbf{1}_{((i,l),(j,l))\in E(g_n)}\prod_{\substack{(l_1,l_2)\in J_1\times J_2\\l_1\ne l_2}} \mathbf{1}_{((i,l_1),(j,l_2))\in E(g_n)}\\
    &=\prod_{l\in J_1\cap J_2}\mathbf{1}_{(i,j)\in E(g_n')}\prod_{\substack{(l_1,l_2)\in J_1\times J_2\\l_1\ne l_2}}\mathbf{1}_{(l_1,l_2)\in E(g_L)}.
\end{align*}
In view of this, and since $g_n'$ converges to a graphon $w$, the decorated graph $G_{g_n}=(K_n,\gamma_n)$ \eqref{eq: decorated compressed grid} converges to a decorated graphon $\gamma$ defined by
\begin{align*}
    (\gamma(x,y))_{J_1,J_2}= w^*_{J_1J_2}(x,y)\prod_{\substack{(l_1,l_2)\in J_1\times J_2\\l_1\ne l_2}}\mathbf{1}_{(l_1,l_2)\in E(g_L)},
\end{align*}
for $J_1,J_2\in \mathcal{P}^*(L)$ and
\begin{align*}
    (\gamma(x,y))_{\varnothing J}=(\gamma(x,y))_{J\varnothing }= \prod_{\substack{(l_1,l_2)\in J\times J\\l_1\ne l_2}}\mathbf{1}_{(l_1,l_2)\in E(g_L)},
\end{align*}
for $J\in \mathcal{P}^*(L)$.
Let $p\ge 1$, $J\in \mathcal{P}^*(L)^{p/2}$, $\alpha\in \{1,*\}^p$, and consider the decorated graphon $F_{f_\pi}^{J,\alpha}=(\Tilde{f}_\pi,\beta_J)$ defined in \eqref{eq: decorated intersection graph}. Note that 
\begin{align*}
\rho(F_{f_\pi}^{J,\alpha},\gamma)&=\int_{[0,1]}\int_{[0,1]^{V(f)}}\prod_{e=(u,v)\in E(\Tilde{f}_\pi)}\langle \beta(e),\gamma(x_u,x_v)\rangle\,\text{d}x\text{d}x_\xi\\
&=:\int_{[0,1]}\int_{[0,1]^{V(f)}} y_\pi(x)\,\text{d}x\text{d}x_\xi .
\end{align*} 
We write
\begin{align*}
    y_\pi(x)&=\prod_{e=(u,v)\in E(\Tilde{f}_\pi)}\langle \beta(e),\gamma(x_u,x_v)\rangle\\
    &=\prod_{\substack{v=\{r,s\}\in V(f_\pi)\\ \alpha_r=\alpha_s}}(\gamma(x_v,x_\xi))_{J_v \varnothing}\prod_{(u,v)\in E(f_\pi)}(\gamma(x_u,x_v))_{J_uJ_v}.
\end{align*}
By definition of $\gamma$, we deduce that
\begin{align*}
    &\frac{y_\pi(x)}{\left(\prod_{(u,v)\in E(f_\pi)} w^*_{J_uJ_v}(x_u,x_v)\right)}=\\
    &\left(\prod_{(u,v)\in E(f_\pi)} \prod_{\substack{(l_1,l_2)\in J_u\times J_v\\l_1\ne l_2}}\mathbf{1}_{(l_1,l_2)\in E(g_L)}\right)\left(\prod_{\substack{u=\{r,s\}\in V(f_\pi)\\ \alpha_r=\alpha_s}}\prod_{\substack{(l_1,l_2)\in J_u\times J_u\\l_1\ne l_2}}\mathbf{1}_{(l_1,l_2)\in E(g_L)}\right).
\end{align*}
By definition, the above product vanishes unless $\pi \in P_2^{J,\alpha}(p)$ in which case we have 
$$
\rho(F_{f_\pi}^{J,\alpha},\gamma) =\int_{[0,1]^{V(f)}}\prod_{(u,v)\in E(f_\pi)} 
w^*_{J_uJ_v}(x_u,y_v)\,\text{d}x.
$$
It remains to apply Theorem~\ref{theorem: master theorem} to finish the proof. 
\end{proof}

With this in hand, we can now state and prove an explicit version of Theorem~\ref{theorem: main theorem introduction}.

\begin{theorem}[Expanded version of Theorem~\ref{theorem: main theorem introduction}]\label{theorem: precise main theorem}
 Let $g_L$ be a graph over $[L]$, $g'_n$ be a graph over $[n]$, and $a\in \A$ be a self-adjoint random variable with mean $\lambda$ and variance $\sigma^2$. Assume that $g'_n\to w$ in the graphon sense. Let $(\A_l)_{l\in [L]}$ be $g_L$-independent subalgebras and $(a_k^{(l)})_{k\in [n]}\subset \A_l$ be $g'_n$-independent and identically distributed random variables with common law $a$. Let
    \begin{align*}
        S_n=\frac{1}{\sqrt{n}}\left\{\sum_{k\in [n]}\left(\prod_{l=1}^L a_k^{(l)}-\lambda^L\right)\right\}.
    \end{align*}
    Then, $S_n$ converges in $*$-distribution to $\sum_{J\in \mathcal{P}^*(L)}\sigma^{|J|}\lambda^{|J^c|} s_J$, where the family $(s_I)_{I\in \mathcal{P}^*(L)}$ has the joint law defined in \eqref{eq: def joint sJ}. 
\end{theorem}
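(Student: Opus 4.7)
The plan is to reduce the theorem to Corollary~\ref{corollary: lexicographic joint law} by centering each variable around its mean and expanding the resulting product. First, for each $k\in[n]$ and $l\in[L]$, write $a_k^{(l)}=\lambda+\sigma\,\tilde a_k^{(l)}$, where $\tilde a_k^{(l)}\in\A_l$ is self-adjoint, centered, and of unit variance. Since scalars lie in every subalgebra, the families $(a_k^{(l)})$ and $(\tilde a_k^{(l)})$ generate the same subalgebras, so $(\tilde a_k^{(l)})_{k\in[n],l\in[L]}$ remains $g_n$-independent (with $g_n=g'_n\cdot g_L$) and identically distributed, with normalized common law. The hypotheses of Corollary~\ref{corollary: lexicographic joint law} therefore hold verbatim for the tilded family.

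Next, I would expand the product, using that $\lambda$ and $\sigma$ are scalars commuting with every element of $\A$:
$$
\prod_{l=1}^L a_k^{(l)}=\prod_{l=1}^L\big(\lambda+\sigma\tilde a_k^{(l)}\big)=\sum_{J\subseteq[L]}\sigma^{|J|}\lambda^{|J^c|}\prod_{l\in J}\tilde a_k^{(l)},
$$
where $\prod_{l\in J}$ is taken in increasing order of $l$. The term $J=\varnothing$ equals $\lambda^L$ and cancels the subtraction in the definition of $S_n$. Setting $b_k^{(J)}=\prod_{l\in J}\tilde a_k^{(l)}$ and $S_n^{(J)}=n^{-1/2}\sum_{k\in[n]}b_k^{(J)}$, which are precisely the quantities featuring in Corollary~\ref{corollary: lexicographic joint law} for the normalized variables, we obtain the key identity
$$
S_n=\sum_{J\in\mathcal{P}^*(L)}\sigma^{|J|}\lambda^{|J^c|}\,S_n^{(J)}.
$$

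Finally, Corollary~\ref{corollary: lexicographic joint law} provides joint convergence in $*$-moments of $(S_n^{(J)})_{J\in\mathcal{P}^*(L)}$ to a family $(s_J)_{J\in\mathcal{P}^*(L)}$ with joint law $\nu_{g_L,w}$ given by \eqref{eq: def joint sJ}. Because every $*$-moment of a fixed scalar linear combination of finitely many variables is a fixed polynomial in the joint $*$-moments of the summands, joint convergence transfers immediately to the linear combination above, yielding convergence of $S_n$ in $*$-distribution to $\sum_{J\in\mathcal{P}^*(L)}\sigma^{|J|}\lambda^{|J^c|}s_J$, as claimed.

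I do not anticipate any substantive obstacle: the argument is a formal reduction once Corollary~\ref{corollary: lexicographic joint law} is in hand. The only detail warranting a line of verification is the preservation of $g_n$-independence under the affine recentering, which follows at once from the subalgebra-based formulation of $\epsilon$-independence used throughout the paper.
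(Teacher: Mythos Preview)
Your proposal is correct and follows essentially the same route as the paper: center each $a_k^{(l)}$ as $\lambda+\sigma\,\tilde a_k^{(l)}$ (the paper writes $z_k^{(l)}=(a_k^{(l)}-\lambda)/\sigma$), expand the product so that the $J=\varnothing$ term cancels $\lambda^L$, rewrite $S_n$ as the fixed linear combination $\sum_{J\in\mathcal{P}^*(L)}\sigma^{|J|}\lambda^{|J^c|}S_n^{(J)}$, and invoke Corollary~\ref{corollary: lexicographic joint law}. Your added remarks on why recentering preserves $g_n$-independence and why joint $*$-moment convergence passes to scalar linear combinations are correct elaborations of steps the paper leaves implicit.
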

\begin{proof}
For every $k\in [n]$ and $l\in [L]$, denote $z_k^{(l)}=\frac{a_k^{(l)}-\lambda}{\sigma}$. Then, we can write
\begin{align*}
    \prod_{l\in [L]}a_k^{(l)}-\lambda^L&=\prod_{l\in [L]}\left(\sigma z_k^{(l)}+\lambda\right)-\lambda^L\\
    &=\sum_{J\in \mathcal{P}^*(L)}\sigma^{|J|}\lambda^{|J^c|}\prod_{l\in J}z_k^{(l)}.
\end{align*}
Therefore, we deduce that 
\begin{align*}
    S_n&=\frac{1}{\sqrt{n}}\sum_{k\in [n]}\prod_{l\in [L]}a_k^{(l)}-\lambda^L
    =\sum_{J\in \mathcal{P}^*(L)}\sigma^{|J|}\lambda^{|J^c|}S_n^{(J)},
\end{align*}
where 
\begin{align*}
    S_n^{(J)}=\frac{1}{\sqrt{n}}\sum_{k\in [n]}\prod_{l\in J}z_k^{(l)}.
\end{align*}
Following Corollary~\ref{corollary: lexicographic joint law}, we deduce that $S_n$ converges to $\sum_{J\in \mathcal{P}^*(L)}\sigma^{|J|}\lambda^{|J^c|} s_J$, where the family $(s_I)_{I\in \mathcal{P}^*(L)}$ has the joint law  stated in Corollary~\ref{corollary: lexicographic joint law}. 
\end{proof}

We are now ready to show how Theorem~\ref{theorem: precise main theorem} implies Theorems~\ref{theorem: case L=1} and \ref{theorem: L=2} and Corollaries~\ref{corollary: tensor free case} and \ref{corollary: classic tensor free case}.

\begin{proof}[Proof of Theorem~\ref{theorem: case L=1}]
Applying Theorem~\ref{theorem: precise main theorem} with $L=1$, 
we only have $s=s_{\{1\}}$, whose law is given by
    \begin{align*}
        \tau(s^p)=\sum_{\pi\in P_2^{J,\alpha}(p)}\int_{[0,1]^{V(f_\pi)}}\prod_{(u,v)\in E(f_\pi)} w^*_{J_uJ_v}(x_u,x_v)\,\text{d}x,
    \end{align*}
    where $J=(\{1\},\ldots,\{1\})$ and $\alpha=(1,\ldots,1)$. Therefore, $P_2^{J,\alpha}=P_2(p)$, $w^*(x,y)=w(x,y)$ and Theorem~\ref{theorem: case L=1} follows. 
\end{proof}

\begin{proof}[Proof of Corollary \ref{corollary: tensor free case}]
    We first note that since $(a_k^{(l)})_{k\in [n]}\subset \A_l$ are free identically distributed, we have that the graph $g_n'$ is edgeless. Hence, its limiting graphon is equal to $w(x,y)=0$ for all $x\ne y$. 
    Applying Theorem~\ref{theorem: precise main theorem}, we deduce that  $S_n$ converges in $*$-moments to
    $\sum_{J \in \mathcal{P}^*(L)} \lambda^{|J^c|}\sigma^{|J|}s_{J}$ with the joint law of $(s_J)_{J\in \mathcal{P}^*(L)}$ given by 
     \begin{align*}
        \tau(s_{J_1}^{\alpha_1}\cdots s_{J_p}^{\alpha_p})=\sum_{\pi\in P_2^{J,\alpha }(p)}  \int_{[0,1]^{V(f_\pi)}}\prod_{(u,v)\in E(f_\pi)} w^*_{J_uJ_v}(x_u,x_v)\,\text{d}x=\sum_{\pi\in P_2^{J,\alpha }(p)}\prod_{(u,v)\in E(f_\pi)} \mathbf{1}_{J_u\cap J_v=\varnothing},
    \end{align*}
    for any $p\ge 1$, $J\in \mathcal{P}^*(L)^p$, and $\alpha\in \{1,*\}^p$. 
      Let $\overline{P}_2^{J,\alpha}(p)$ be those partitions $\pi\in P_2^{J,\alpha}(p)$ such that $
        \prod_{(u,v)\in E(f_\pi)} \mathbf{1}_{J_u\cap J_v=\varnothing}=1$. 
        It follows from the definition that $\overline{P}_2^{J,\alpha}(p) \subseteq \NC_2(h_L,J)$. Note that since $g_L$ has no loops, the condition $(J_1,J_2)\in E(h_L)$ implies that $J_1\cap J_2=\varnothing$. Therefore, following the definition of $\overline{P}_2^{J,\alpha}(p)$, a partition $\pi \in \NC_2(h_L,J)$ belongs to $\overline{P}_2^{J,\alpha}(p)$ if and only if 
        $$
        \prod_{u=\{r,s\}\in \pi} \mathbf{1}_{\{\alpha_r=\alpha_s\, \Rightarrow\, g_L|_{J_u}\text{ is the complete graph}\}}=1.
        $$
Thus, we deduce that 
$$
 \tau(s_{J_1}^{\alpha_1}\cdots s_{J_p}^{\alpha_p})=\sum_{\pi\in \NC_2(h_L,J)} \prod_{u=\{r,s\}\in \pi} \mathbf{1}_{\{\alpha_r=\alpha_s\, \Rightarrow\, g_L|_{J_u}\text{ is the complete graph}\}}.
$$
Let $I\in \mathcal{P}^*(L)$ be such that $g_L|_{I}$ is the complete graph. Then setting $J=(I,\ldots, I)$ and using that $h_L$ has no loops, we deduce that for any  $\alpha\in \{1,*\}^p$ we have
$$
 \tau(s_{I}^{\alpha_1}\cdots s_{I}^{\alpha_p})= |\NC_2(p)|,
$$
which implies that $s_I$ is a semi-circle random variable. Similarly, one can check that if $g_L|_{I}$ is not the complete graph, then $s_I$ is a circular random variable. In view of this, we see that for any $p\ge 1$, $J\in \mathcal{P}^*(L)^p$, $\alpha\in \{1,*\}^p$ and $\pi\in \NC_2(h_L,J)$, we have 
    \begin{align*}
       \prod_{u=\{r,s\}\in \pi} \mathbf{1}_{\{\alpha_r=\alpha_s\, \Rightarrow\, g_L|_{J_u}\text{ is the complete graph}\}}   =\prod_{u=\{r,s\}\in \pi}\cumuf_2(s_{J_u}^{\alpha_r},s_{J_u}^{\alpha_s}).
    \end{align*}
Putting together the above and using that only the second cumulants of circular and semi-circle variables are nonvanishing, we get that 
$$
 \tau(s_{J_1}^{\alpha_1}\cdots s_{J_p}^{\alpha_p})=\sum_{\pi\in \NC(h_L,J)}\cumuf_\pi(s_{J_1}^{\alpha_1},\ldots,s_{J_p}^{\alpha_p}),
$$
 for any $p\ge 1$, $J\in \mathcal{P}^*(L)^p$, and $\alpha\in \{1,*\}^p$. It remains to use Speicher's lemma \ref{lemma: speicher} to deduce that the collection $(s_J)_{J\in \mathcal{P}^*(L)}$ is $h_L$-independent and finish the proof. 
\end{proof}

\begin{proof}[Proof of Corollary~\ref{corollary: classic tensor free case}]
    Apply Corollary~\ref{corollary: tensor free case} with $g_L$ being the complete graph to deduce that $S_n$ converges to $\sum_{J \in \mathcal{P}^*(L)} \lambda^{|J^c|}\sigma^{|J|}s_{J}$. Since all restrictions of $g_L$ are complete graphs, we deduce that each $s_J$ is a semi-circle variable. Moreover, since $g_L$ is complete but does not have loops, we deduce the edge set of $h_L$ as stated. 
\end{proof}

\begin{proof}[Proof of Theorem~\ref{theorem: L=2}]
Let $\delta$ as in Theorem~\ref{theorem: L=2} and $S_n$ normalized accordingly. We apply Corollary~\ref{corollary: classic tensor free case} with $L=2$ to deduce that $S_n$ converges to 
$$
\frac{\sigma^2}{\delta} s_{\{1,2\}} + \frac{\sigma \lambda}{\delta} s_{\{1\}} + \frac{\sigma^2}{\delta} s_{\{2\}},
$$
with the variables $s_{\{1,2\}}, s_{\{1\}}$ and $s_{\{2\}}$ being semi-circles. Moreover, since  $\{1,2\}\cap \{1\}\neq \emptyset$ and $\{1,2\}\cap \{2\}\neq \emptyset$ while $\{1\}\cap \{2\}= \emptyset$, we deduce that $s_{\{1\}}$ and $s_{\{2\}}$ are classically independent while $s_{\{1,2\}}$ is free from $(s_{\{1\}}, s_{\{2\}})$. Theorem~\ref{theorem: L=2} follows. 
\end{proof}

\section{Proof of the general theorem}
The goal of this section is to prove Theorem \ref{theorem: master theorem}. Let $g_n=([n]\times[L],E(g_n))$ be a grid graph such that $G_{g_n}$ converges in the decorated-graphon sense to a decorated graphon $w$. Let $(a_k^{(r)})_{k\in [n],r\in [L]}$ be a family of $g_n$-independent and identically distributed random variable  with common normalized law $a\in \A$. Define
    \begin{align*}
        S_n^{(I)}=\frac{1}{\sqrt{n}}\sum_{k\in [n]}b_n^{(I)};\quad b_n^{(I)}=\prod_{l\in J}a_n^{(l)},
    \end{align*}
    for $I\in \mathcal{P}^*(L)$. We start with the following reduction lemma. 

\begin{lemma}\label{lemma: clt for the joint law}
For any $\alpha\in \{1,*\}^p$ and any $J\in [\mathcal{P}^*(L)]^p$, we have 
    \begin{align*}
        \lim_{n\to \infty}\tau\left((S_n^{(J_1)})^{\alpha_1}\cdots (S_n^{(J_p)})^{\alpha_p}\right)=\sum_{\pi \in P_2(p)}\lim_{n\to \infty}\frac{1}{n^{p/2}}\sum_{\substack{i\in [n]^p\\\ker(i) =\pi}}\tau\left\{\left(b_{i_{1}}^{(J_1)}\right)^{\alpha_1}\cdots \left(b_{i_{p}}^{(J_p)}\right)^{\alpha_p}\right\}.
    \end{align*}
\end{lemma}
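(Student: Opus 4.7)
The plan is to carry out a standard CLT-style moment expansion and show that among all partitions of $[p]$, only pair partitions survive in the limit. I would first use multilinearity and the definition of $S_n^{(I)}$ to write
\begin{align*}
\tau\left((S_n^{(J_1)})^{\alpha_1}\cdots (S_n^{(J_p)})^{\alpha_p}\right) = \sum_{\pi \in P(p)} \frac{1}{n^{p/2}} \sum_{\substack{i \in [n]^p \\ \ker(i) = \pi}} \tau\left((b_{i_1}^{(J_1)})^{\alpha_1}\cdots (b_{i_p}^{(J_p)})^{\alpha_p}\right),
\end{align*}
reducing the lemma to showing that every $\pi \in P(p) \setminus P_2(p)$ yields a vanishing contribution.

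For $\pi$ with no singletons but some block of size at least $3$, the argument is purely combinatorial: since $|\pi| < p/2$, the number of tuples $i$ with $\ker(i)=\pi$ is at most $n^{|\pi|}$, which combined with the $n^{-p/2}$ prefactor produces $n^{|\pi|-p/2}=o(1)$. This is enough provided the traces $\tau((b_{i_1}^{(J_1)})^{\alpha_1}\cdots)$ are uniformly bounded in $n$ and $i$. I would secure this uniform bound by applying Speicher's moment--cumulant formula (Lemma~\ref{lemma: speicher}) to the fully expanded product in $\sum_j |J_j| \le pL$ copies of $a$: the resulting finite sum, whose cardinality has a universal Catalan-type bound, consists of products of free cumulants of $a$ and thus depends only on finitely many moments of $a$, not on $n$ or $i$.

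The case of $\pi$ with a singleton block $\{k\}$ is the main obstacle, and I expect it to force the trace to vanish identically rather than merely in the limit. Because $i_k$ differs from every other $i_j$ and each $l\in J_k$ appears exactly once among the factors originating from position $k$ in the expansion, each subalgebra $\A_{(i_k,l)}$ hosts exactly one factor in the whole expanded product; equivalently the corresponding $|J_k|$ slots are singletons of $\ker(\vec{i}')$, where $\vec{i}'$ lists the subalgebra indices of all factors across the expansion. Any partition $\sigma \le \ker(\vec{i}')$ appearing in Speicher's expansion must therefore leave these positions as singleton blocks, and since $a$ is normalized, $\cumuf_1(a)=\tau(a)=0$ annihilates every such term. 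The delicate point is the bookkeeping of subalgebra indices and the verification that the refinement constraint $\sigma \leq \ker(\vec{i}')$ really forces those positions to remain singletons regardless of $g_n$; the self-adjointness of $a$ will ensure that $(b_{i_k}^{(J_k)})^{*}$ is a reordering within the same subalgebras, so both choices $\alpha_k\in\{1,*\}$ are handled uniformly.

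Combining the two cases eliminates every $\pi \notin P_2(p)$ from the limit. Since $|P_2(p)|$ is finite, one may freely interchange limit and sum over $\pi \in P_2(p)$ to arrive at the claimed identity.
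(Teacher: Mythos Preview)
Your proposal is correct and follows essentially the same route as the paper: expand by multilinearity, group by $\ker(i)=\pi$, kill singleton partitions via Speicher's formula and $\cumuf_1(a)=0$, and dispose of partitions with a block of size $\ge 3$ by the counting bound $|\{i:\ker(i)=\pi\}|\le n^{|\pi|}$ together with a uniform moment bound. The only cosmetic differences are that you treat the two cases in the opposite order and that your justification of the uniform bound (via the finite Speicher expansion in $\le pL$ factors) is slightly more explicit than the paper's, which simply invokes a maximum over $i\in[p]^p$.
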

\begin{proof}
    We start by writing
    \begin{align*}
        \tau\left((S_n^{(J_1)})^{\alpha_1}\cdots (S_n^{(J_p)})^{\alpha_p}\right)&=\frac{1}{n^{p/2}}\sum_{i\in [n]^p}\tau\left\{\left(b_{i_{1}}^{(J_1)}\right)^{\alpha_1}\cdots \left(b_{i_{p}}^{(J_p)}\right)^{\alpha_p}\right\}\\
        &= \sum_{\pi \in P(p)}\frac{1}{n^{p/2}}\sum_{\substack{i\in [n]^p\\\ker (i) =\pi}}\tau\left\{\left(b_{i_{1}}^{(J_1)}\right)^{\alpha_1}\cdots \left(b_{i_{p}}^{(J_p)}\right)^{\alpha_p}\right\}.
    \end{align*}
    Let $m=|J_1|+\cdots+ |J_p|$. The product
    \begin{align*}
        \left(b_{i_{1}}^{(J_1)}\right)^{\alpha_1}\cdots \left(b_{i_{p}}^{(J_p)}\right)^{\alpha_p}=x_1\cdots x_m
    \end{align*}
    can be written as the product of the elements $x_k=a_{i_j}^{(l)}$, for some $l$ and $j$ that depend only on $\pi,\alpha$ and $J$. Let $\A_{(i,l)}$ be the algebra generated by $a_i^{(l)}$ and denote by $\theta$ the collection of indices $(\theta_k)_{k\in [m]}$ of the algebras associated with the $x_k$'s, namely, $x_k \in \A_{\theta_k}$ for every $k\in [m]$. For instance, if $\pi=\{\{1,2\}\}$, $J_1=\{1,2\}$, $J_2=\{2\}$, $\alpha=(*,1)$ and $\ker(i)=\pi$, then we obtain
    \begin{align*}
&\left(b_{i_{1}}^{(J_1)}\right)^{\alpha_1}\left(b_{i_{2}}^{(J_2)}\right)^{\alpha_2}=a_{i_1}^{(2)}a_{i_1}^{(1)}a_{i_1}^{(2)}=x_1x_2x_3;\\
        &\theta=(\theta_1,\theta_2,\theta_3)=((i_1,2),(i_1,1),(i_1,2)).
    \end{align*}
    Now, suppose $\pi$ contains a block of size one, say $v=\{v\}$. Without loss of generality, assume that $v=1$. Define $\theta_1=(i_1,l_1)$ so that $x_1\in \A_{\theta_1}$. By Speicher's lemma, we have
    \begin{align*}
        \tau(x_1\cdots x_m)=\sum_{\sigma \in \NC(g_n,\theta)}\cumuf_\sigma(x_1,\cdots,x_m).
    \end{align*}
    Since $\theta_1\ne \theta_k$ for all $j\ge 2$ (as $i_1 \ne i_j$ for all $j\ne 1$), and every $\sigma \in \NC(g_n,\theta)$ satisfies $\sigma \le \ker(\theta)$, it follows that $\{1\}\in \sigma$ must be a singleton block. Thus, 
    \begin{align*}
\cumuf_\sigma(x_1,\cdots,x_m)=\cumuf_1(x_1)\cumuf_{\sigma\setminus\{\{1\}\}}(x_2,\cdots,x_m)=0,
    \end{align*}
    since $x_1$ is centered. Therefore, partitions with single blocks do not contribute, leading to
    \begin{align*}
        \tau\left((S_n^{(J_1)})^{\alpha_1}\cdots (S_n^{(J_p)})^{\alpha_p}\right)=\sum_{\pi \in P_{\ge 2}(p)}\frac{1}{n^{p/2}}\sum_{\substack{i\in [n]^p\\\ker (i) =\pi}}\tau\left\{\left(b_{i_{1}}^{(J_1)}\right)^{\alpha_1}\cdots \left(b_{i_{p}}^{(J_p)}\right)^{\alpha_p}\right\}.
    \end{align*}
Suppose $v\in \pi$ satisfies $|v| \ge 3$. Then $|\pi|<p/2$, and we estimate
    \begin{align*}
        \frac{1}{n^{p/2}}\sum_{\substack{i\in [n]^p\\\ker (i) =\pi}}\tau\left\{\left(b_{i_{1}}^{(J_1)}\right)^{\alpha_1}\cdots \left(b_{i_{p}}^{(J_p)}\right)^{\alpha_p}\right\} \le C_{a, J, \alpha}n^{|\pi|-p/2},
    \end{align*}
    where 
    \begin{align*}
        C_{a, J,\alpha}=\max_{i \in [p]^p}\tau\left\{\left(b_{i_{1}}^{(J_1)}\right)^{\alpha_1}\cdots \left(b_{i_{p}}^{(J_p)}\right)^{\alpha_p}\right\}<\infty ,
    \end{align*}
    Since $|\pi|-p/2<0$, it follows that
    \begin{align*}
        \frac{1}{n^{p/2}}\sum_{\substack{i\in [n]^p\\\ker (i) =\pi}}\tau\left\{\left(b_{i_{1}}^{(J_1)}\right)^{\alpha_1}\cdots \left(b_{i_{p}}^{(J_p)}\right)^{\alpha_p}\right\} \to 0.
    \end{align*}
   Thus, in the large-$n$ limit, only pair partitions contribute, leading to 
    \begin{align*}
        \lim_{n\to \infty}\tau\left((S_n^{(J_1)})^{\alpha_1}\cdots (S_n^{(J_p)})^{\alpha_p}\right)=\sum_{\pi \in P_2(p)}\lim_{n\to \infty}\frac{1}{n^{p/2}}\sum_{\substack{i\in [n]^p\\\ker (i) =\pi}}\tau\left\{\left(b_{i_{1}}^{(J_1)}\right)^{\alpha_1}\cdots \left(b_{i_{p}}^{(J_p)}\right)^{\alpha_p}\right\}.
    \end{align*}
This completes the proof. 
\end{proof}
Define
\begin{align*}
    \rho_n(\pi,\alpha,J):=\frac{1}{n^{p/2}}\sum_{\substack{i\in [n]^p\\ \ker(i)=\pi}}\tau\left\{\left(b_{i_{1}}^{(J_1)}\right)^{\alpha_1}\cdots \left(b_{i_{p}}^{(J_p)}\right)^{\alpha_p}\right\}.
\end{align*}
Following the previous proof, let $m=|J_1|+\cdots+ |J_p|$, so that
\begin{align*}
    \left(b_{i_{1}}^{(J_1)}\right)^{\alpha_1}\cdots \left(b_{i_{p}}^{(J_p)}\right)^{\alpha_p}=x_1\cdots x_m,
\end{align*}
where each $x_k=a_{i_j}^{(l)}$ depends only on $\pi,\alpha$ and $J$. Let $\theta_k$ be the algebra associated with $x_k$, i.e., $x_k \in \A_{\theta_k}$. Figure \ref{fig: alpha partition} illustrates the case $\pi=\{\{1,3\},\{2,4\},\{5,6\}\}$, $\alpha=(1,1,*,*,1,1)$, and
\begin{align*}
    &J_1=J_3=\{1,2\};\; J_2=J_4=\{3,4,5\};\;
     J_5=J_6=\{1,2\}.
\end{align*}
\begin{figure}[H]
     \centering
        \begin{subfigure}[b]{0.4\textwidth}
         \centering
         \begin{tikzpicture}[scale=0.50]
            \draw[-] (1,0) edge (1,1);
            \draw[-] (1,1) edge (3,1);
            \draw[-] (3,1) edge (3,0);
        
            \draw[-] (2,0) edge (2,2);
            \draw[-] (2,2) edge (4,2);
            \draw[-] (4,2) edge (4,0);
        
            \draw[-] (5,0) edge (5,1);
            \draw[-] (5,1) edge (6,1);
            \draw[-] (6,1) edge (6,0);
        \end{tikzpicture}
        \caption{Partition $\pi$.}
    \end{subfigure}
     \hfill
     \begin{subfigure}[b]{0.4\textwidth}
         \centering
         \begin{tikzpicture}[scale=0.50]
            \draw[-] (1,0) edge (1,0.7);
            \draw[-] (1,0.7) edge (7,0.7);
            \draw[-] (7,0.7) edge (7,0);
        
            \draw[-] (2,0) edge (2,0.3);
            \draw[-] (2,0.3) edge (6,0.3);
            \draw[-] (6,0.3) edge (6,0);
        
            \draw[-] (3,0) edge (3,2);
            \draw[-] (3,2) edge (10,2);
            \draw[-] (10,2) edge (10,0);

            \draw[-] (4,0) edge (4,1.5);
            \draw[-] (4,1.5) edge (9,1.5);
            \draw[-] (9,1.5) edge (9,0);

            \draw[-] (5,0) edge (5,1);
            \draw[-] (5,1) edge (8,1);
            \draw[-] (8,1) edge (8,0);

            \draw[-] (11,0) edge (11,1);
            \draw[-] (11,1) edge (13,1);
            \draw[-] (13,1) edge (13,0);

            \draw[-] (12,0) edge (12,2);
            \draw[-] (12,2) edge (14,2);
            \draw[-] (14,2) edge (14,0);
        \end{tikzpicture}
        \caption{Partition $\ker(\theta)$.}
     \end{subfigure}
        \caption{Example of $\pi$ and $\ker(\theta)$.}
        \label{fig: alpha partition}
\end{figure}
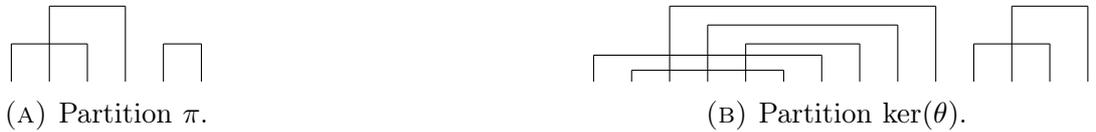
As before, for any partition $\sigma\in P(m)$ containing a singleton block, 
\begin{align*}
    \cumuf_\sigma(x_1,\ldots,x_m)=0. 
\end{align*}
Thus
\begin{align*}
    \tau(x_1\cdots x_m)=\sum_{\sigma \in \NC_{\ge 2}(g_n,\theta)}\cumuf_\sigma(x_1,\cdots,x_m).
\end{align*}
Since $\pi\in P_2(p)$, any $v\in \ker(\theta)$ satisfies $|v| \le 2$, meaning $\ker(\theta)\in P_{\le 2}(m)$. In particular, if $\sigma \in \NC_{\ge 2} (g_n,\theta)$, then $\sigma \le \ker(\theta)$ implies $\sigma \in P_{\le 2}(m)$. Consequently, we must have $\sigma=\ker(\theta)$, leading to $\NC_{\ge 2}(g_n,\theta)=\{\ker(\theta)\}$ whenever it is nonempty.
If $\ker(\theta)\in \NC_2(g_n,\theta)$, it follows that 
\begin{align*}
    \cumuf_{\ker(\theta)}(x_1,\cdots,x_m)=1,
\end{align*}
since the variables are normalized. 
Thus, 
\begin{align*}
    \rho_n(\pi,\alpha,J)=\frac{1}{n^{p/2}}\sum_{\substack{i\in [n]^p\\ \ker(i)=\pi}}\mathbf{1}_{\ker(\theta)\in \NC_2(g_n,\theta)}.
\end{align*}
Next, we give a simple condition under which $\rho_n(\pi,\alpha, J)=0$.
\begin{lemma}\label{lemma: contribution of P_2^J}
    Let $\alpha\in \{1,*\}^p$, $J\in [\mathcal{P}^*(L)]^p$ and $\pi\in P_2(p)$. If there exists $\{r,s\}\in \pi$ such that $J_r \ne J_s$, then $\rho_n(\pi,\alpha,J)=0$.
\end{lemma}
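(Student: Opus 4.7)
The plan is to leverage the explicit formula $\rho_n(\pi,\alpha,J)=\frac{1}{n^{p/2}}\sum_{\ker(i)=\pi}\mathbf{1}_{\ker(\theta)\in \NC_2(g_n,\theta)}$ derived just before the lemma and show that, under the hypothesis $J_r\neq J_s$ for some block $\{r,s\}\in\pi$, the indicator vanishes for \emph{every} $i$ with $\ker(i)=\pi$. Since $\NC_2(g_n,\theta)$ only contains pair partitions, it suffices to exhibit a singleton in $\ker(\theta)$.

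The concrete step is a counting argument. Assume without loss of generality that there exists $l^*\in J_r\setminus J_s$ (the other case is symmetric). Fix $i\in [n]^p$ with $\ker(i)=\pi$ and count the positions $k\in[m]$ such that $\theta_k=(i_r,l^*)$. By the construction recalled in the proof of Lemma~\ref{lemma: clt for the joint law}, $\theta_k=(i_j,l)$ for some $j\in[p]$ and some $l\in J_j$, so the equality $\theta_k=(i_r,l^*)$ forces $i_j=i_r$ and $l=l^*\in J_j$. Because $\ker(i)=\pi$ and $\pi$ is a pair partition, $i_j=i_r$ implies $j\in\{r,s\}$; but $l^*\notin J_s$ rules out $j=s$, so only $j=r$ remains. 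Within the factor $(b_{i_r}^{(J_r)})^{\alpha_r}=\prod_{l\in J_r}a_{i_r}^{(l)}$ (read in the relevant order — self-adjointness of $a$ makes the choice of order irrelevant for the identification of $\theta_k$), the index $l^*$ appears exactly once. Hence there is exactly one position $k$ with $\theta_k=(i_r,l^*)$.

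This singleton position means $\ker(\theta)$ has a block of size $1$, so $\ker(\theta)\notin P_2(m)$ and a fortiori $\ker(\theta)\notin \NC_2(g_n,\theta)$. The indicator in the expression for $\rho_n(\pi,\alpha,J)$ therefore vanishes for every $i$ with $\ker(i)=\pi$, yielding $\rho_n(\pi,\alpha,J)=0$.

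I do not anticipate a real obstacle: the only subtlety is verifying that the star operation $\alpha_r\in\{1,*\}$ does not change the count of occurrences of $(i_r,l^*)$ among the $\theta_k$'s, which is immediate once one observes that $a$ being self-adjoint collapses $a_{i_r}^{(l^*)}$ and its adjoint to the same variable in $\A_{(i_r,l^*)}$. Everything else is a bookkeeping of indices that exploits the structural fact that a block $\{r,s\}$ of the pair partition $\pi$ provides the \emph{only} other chance for $l^*\in J_r\setminus J_s$ to appear — and that chance is ruled out by the hypothesis $J_r\neq J_s$.
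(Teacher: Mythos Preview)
Your proposal is correct and follows essentially the same approach as the paper: both arguments show that the hypothesis $l^*\in J_r\setminus J_s$ forces a singleton block in $\ker(\theta)$, whence $\ker(\theta)\notin \NC_2(g_n,\theta)$ and the indicator vanishes. The paper streamlines the bookkeeping by invoking cyclicity to place the distinguished position first, whereas you count occurrences directly; the content is the same.
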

\begin{proof}
Assume that there exists an element $l\in J_{r}\setminus J_{s}$. Without loss of generality, by cyclicity, we take $r=1$ and set $x_1=a_{i_1}^{(l)}$. Since $\pi\in P_2(p)$ is a pair partition, the index $i_{1}$ appears exactly once more, namely as $i_{s}$. Given $l \notin J_{s}$, we observe that $\theta_1=(i_1,l)$ differs from $\theta_k$ for all $k\ge 2$. Consequently, $\{1\}\in \ker(\theta)$, implying that $\ker(\theta) \notin \NC_2(g_n,\theta)$. Thus, $\rho_n(\pi,\alpha,J)=0$, completing the proof.
\end{proof}
Recalling the definition
\begin{align*}
    P_2^J(p)=\{\pi\in P_2(p): \forall \{r,s\}\in \pi, J_r=J_s\}.
\end{align*}
By Lemma \ref{lemma: contribution of P_2^J}, we conclude that  $\rho_n(\pi,\alpha,J)=0$ whenever $\pi\notin P_2^J(p)$. Moreover, $\ker(\theta)\in P_2(m)$ for every $\pi\in P_2^J(p)$.
To complete the argument, we now characterize the condition $\ker(\theta) \in \NC_2(g_n,\theta)$. 
\begin{lemma}\label{lem: characterization of theta partitions}
    Let $\alpha\in \{1,*\}^p$, $J\in [\mathcal{P}^*(L)]^p$, $\pi\in P_2^J(p)$ and $\ker(i)=\pi$. Then $\ker(\theta) \in \NC_2(g_n,\theta)$ if and only if the following conditions are satisfied:
    \begin{enumerate}
        \item\label{condition 1} If a pair $\{r,s\}\in \pi$ satisfies $\alpha_r=\alpha_s$, the subgraph $\{(i_r,l): l\in J_r\}\subseteq g_n$ is complete.
        \item\label{condition 2} If two pairs $\{r_1,s_1\},\{r_2,s_2\}\in \pi$ form a crossing, then $((i_{r_1},l_1),(i_{r_2},l_2))\in E(g_n)$ for all $(l_1,l_2)\in J_{r_1}\times J_{r_2}$.
    \end{enumerate}
\end{lemma}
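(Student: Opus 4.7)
The plan is to describe $\ker(\theta)$ very explicitly as a pair partition of $[m]$, enumerate all of its crossings, and then read the $\NC(g_n,\theta)$ edge-requirement off each type of crossing.

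First, I fix notation for the word $x_1\cdots x_m$. Since $\pi\in P_2^J(p)$, each block of $\pi$ is a pair $\{r,s\}$ with $J_r=J_s=:J_u$ and $i_r=i_s$. The segment of the word coming from position $j$ consists of the $|J_j|$ letters $\{a_{i_j}^{(l)}\}_{l\in J_j}$, listed in increasing $l$-order when $\alpha_j=1$ and in decreasing $l$-order when $\alpha_j=*$ (using that $a$ is self-adjoint). Because $\ker(i)=\pi$ and $J\in P_2^J(p)$, two letters agree on their $\theta$-value if and only if they come from positions paired by $\pi$ and share the same value of $l$. Hence $\ker(\theta)$ is a pair partition of $[m]$ whose pairs are in bijection with $\{(\{r,s\},l):\{r,s\}\in\pi,\ l\in J_u\}$.

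Next, I would classify the crossings of $\ker(\theta)$ into two families. Inside a single block $\{r,s\}\in\pi$ (say $r<s$), pick distinct $l_1,l_2\in J_u$ and consider the pairs joining their twin letters. If $\alpha_r=\alpha_s$, the $r$-segment and $s$-segment are listed in the same relative order, so the two pairs cross; if $\alpha_r\neq\alpha_s$, the orders are opposite and the two pairs are nested. Between two distinct $\pi$-blocks $\{r_1,s_1\}$ and $\{r_2,s_2\}$, a direct check on the four possible orderings of $r_1,s_1,r_2,s_2$ shows that any pair across the first block crosses any pair across the second block if and only if the two $\pi$-blocks themselves cross in $\pi$; nested or parallel blocks contribute no crossing to $\ker(\theta)$.

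Now I translate the $\NC(g_n,\theta)$ condition. For any crossing of the first type, the corresponding vertex pair in $g_n$ is $((i_r,l_1),(i_r,l_2))$ with $l_1\neq l_2$ in $J_u$, so requiring an edge for every such crossing is exactly the assertion that $\{(i_r,l):l\in J_r\}$ induces a complete subgraph of $g_n$, which is condition~\eqref{condition 1}. For any crossing of the second type, the vertex pair is $((i_{r_1},l_1),(i_{r_2},l_2))$ with $(l_1,l_2)\in J_{r_1}\times J_{r_2}$, and requiring an edge for every such crossing is precisely condition~\eqref{condition 2}. Since the two families above exhaust all crossings of $\ker(\theta)$, the conjunction of \eqref{condition 1} and \eqref{condition 2} is both necessary and sufficient for $\ker(\theta)\in \NC_2(g_n,\theta)$.

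The one step that needs a little care is the case $\alpha_r\neq\alpha_s$ inside a single $\pi$-block, where one must verify that the opposite ordering of the $r$-segment and $s$-segment really forces the within-block pairs to nest rather than cross; once this is handled the remaining bookkeeping is routine, and the separation between the two types of crossings makes the iff clean.
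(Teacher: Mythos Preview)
Your proposal is correct and follows essentially the same approach as the paper: both arguments classify crossings of $\ker(\theta)$ into the ``within a single $\pi$-block'' and ``between distinct $\pi$-blocks'' types and match them to conditions~\eqref{condition 1} and~\eqref{condition 2} respectively. Your version is in fact a bit more explicit than the paper's, which simply asserts ``by definition of $\ker(\theta)$'' both that within-block crossings occur exactly when $\alpha_r=\alpha_s$ and that between-block crossings occur exactly when the $\pi$-blocks cross; your segment-ordering argument fills in precisely this justification, including the nested/crossing dichotomy you flagged as needing care.
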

\begin{proof}
Let us first prove that if the two conditions are satisfied, then $\ker(\theta)\in \NC_2(g_n,\theta)$. Let $v_1,v_2\in \ker(\theta)$ be two crossing blocks. We denote $v_k=\{\theta_1^{(k)},\theta_2^{(k)}\}$, where $\theta_1^{(k)}=(i_{r_k},l_{k})$ and $\theta_2^{(k)}=(i_{s_k},l_{k})$, for $k=1,2$ ($i_{r_k}=i_{s_k}$). We either have i) $i_{r_1}=i_{r_2}$; or ii) $i_{r_1}\ne i_{r_2}$. 
\begin{enumerate}
    \item Case (i) ensures that $\{r_1,r_2\}\in \pi$, and by definition of $\ker(\theta)$, we have $\alpha_{r_1}=\alpha_{r_2}$.
    \item Case (ii) ensures that $\{r_1,s_1\},\{r_2,s_2\}\in \pi$ form a crossing, by definition of $\ker(\theta)$.
\end{enumerate}
In both cases, Conditions \ref{condition 1} and \ref{condition 2} imply that $((u_{r_1},l_1),(u_{r_2},l_2))\in E(g_n)$ and it follows that $\ker(\theta)\in \NC_2(g_n,\theta)$.

Now suppose that $\ker(\theta)\in \NC_2(g_n,\theta)$. Let us prove that the two conditions are satisfied.
\begin{enumerate}
    \item Let $\{r,s\}\in \pi$ be such that $\alpha_r=\alpha_s$. By definition of $\ker(\theta)$, the blocks $v_1=\{(i_r,l_1),(i_s,l_1)\}$ and $v_2=\{(i_r,l_2),(i_s,l_2)\}$ will cross for any distinct $l_1,l_2\in J_r$. Since $\ker(\theta)\in \NC_2(g_n,\theta)$, it follows that $((i_r,l_1),(i_r,l_2))\in E(g_n)$ for any distinct $l_1,l_2\in J_r$. The first condition follows.
    \item Let $\{r_1,s_1\},\{r_2,s_2\}\in \pi$ be such that they form a crossing. By definition of $\ker(\theta)$, the blocks $v_1=\{(i_{r_1},l_1),(i_{s_1},l_1)\}$ and $v_2=\{(i_{r_2},l_2),(i_{s_2},l_2)\}$ will cross for any $(l_1,l_2)\in J_{r_1}\times J_{r_2}$. Since $\ker(\theta)\in \NC_2(g_n,\theta)$, it follows that $((i_{r_1},l_1),(i_{r_2},l_2))\in E(g_n)$ for any $(l_1,l_2)\in J_{r_1}\times J_{r_2}$. The second condition follows, and the lemma is proved.
\end{enumerate}
\end{proof}
We are now ready to finish the proof of Theorem~\ref{theorem: master theorem}. In view of Lemma~\ref{lem: characterization of theta partitions}, we deduce that for any $\pi\in P_2^J(p)$, we have
\begin{align*}
    \rho_n(\pi,J,\alpha)&=\frac{1}{n^{p/2}}\sum_{\substack{i\in [n]^p\\ \ker(i)=\pi}}\prod_{\substack{\{r,s\}\in V(f_\pi)\\ \alpha_r=\alpha_s}}\left(\mathbf{1}_{\{(i_r,l): l\in J_r\}\subseteq g_n \text{ is complete}}\right) \\
    &\times\prod_{(\{r_1,s_1\},\{r_2,s_2\})\in E(f_\pi)}\left(\prod_{(l_1,l_2)\in J_{r_1}\times J_{r_2}}\mathbf{1}_{(i_{r_1},l_1),(i_{r_2},l_2))\in E(g_n)}\right).
\end{align*}
Recalling the definitions of $F_{f_\pi}^{J,\alpha}=(\tilde{f}_{\pi},\beta)$ and $G_{g_n}=(K_n,\gamma_n)$, and in view of the one-to-one map from injective maps $\phi:V(f_\pi) \hookrightarrow [n]$ to $i\in [n]^p$ such that $\ker(i)=\pi$ given by $\phi(u)=i_r$ for a block $u=\{r,s\}\in \pi$, Lemma \ref{lemma: rho computation} implies that
\begin{align*}
    \rho_n(\pi,J,\alpha)=\left(\frac{n}{n-p/2}\right)\rho_{\operatorname{inj}}(F^{J,\alpha}_{f_\pi}, G_{g_n}).
\end{align*}

Since $\rho_{\operatorname{inj}}(F_{f_\pi}^{J,\alpha},G_{g_n})$ and $\rho(F_{f_\pi}^{J,\alpha},G_{g_n})$ are asymptotically equivalent as $n\to\infty$ \cite[Section 2]{lovasz2022multigraph}, we conclude that
\begin{align*}
    \lim_{n\to \infty}\tau\left((S_n^{(J_1)})^{\alpha_1}\cdots (S_n^{(J_p)})^{\alpha_p}\right)=\sum_{\pi\in P_2^J(p)}\rho(F_{f_\pi}^{J,\alpha},w).
\end{align*}

\bibliographystyle{abbrvnat}

\end{document}